\newcommand{\what}{\widehat}%
\newcommand{\R}{\mathbb R}%
\newcommand{\C}{\mathbb C}%
\newcommand{\Z}{\mathbb Z}%
\newtheorem{theorem}{Theorem}[section]
\newtheorem{lemma}[theorem]{Lemma}
\newtheorem{corollary}[theorem]{Corollary}
\theoremstyle{definition}
\theoremstyle{definition}
\numberwithin{equation}{section}
\begin{document}
\title{Wiener Tauberian theorem for hypergeometric transforms}
\subjclass[2010]{Primary 43A85; Secondary 22E30} \keywords{Wiener Tauberian theorem,  Hypergeometric functions, Hypergeometric transforms,  Resolvent transform}

\author{Sanjoy Pusti and Amit Samanta}

\address[Sanjoy Pusti]{Department of Mathematics and Statistics; Indian Institute of Technology, Kanpur-208016, India.}
\email{spusti@iitk.ac.in}

\address[Amit Samanta]{Department of Mathematics and Statistics; Indian Institute of Technology, Kanpur-208016, India.}
\email{amit.gablu@gmail.com}

\thanks{The second author is financially supported by NBHM, Govt. of India}
\thanks{We are thankful to Prof. Angela Pasquale for several helpful suggestions.}

\begin{abstract}
We prove a genuine analogue of Wiener Tauberian theorem for hypergeometric transforms. As an application we prove analogue of Furstenberg theorem on Harmonic functions.
\end{abstract}

\maketitle
\section{Introduction}
A famous theorem of Norbert Wiener states that for a function $f\in L^1(\R)$, span of translates $f(x-a)$ with complex coefficients is dense in $L^1(\R)$ if and only if the Fourier transform $\what{f}$ is nonvanishing on $\R$. This theorem has been extended to abelian groups. The hypothesis (in the abelian case) is on a Haar integrable function which has nonvanishing Fourier transform on all unitary characters. However, Ehrenpreis and Mautner (in \cite{EM1}) has observed that Wiener Tauberian theorem fails even for the commutative Banach algebra of integrable radial functions
on $\mathrm {SL}(2, \R)$. A modified version of the theorem was established in \cite[Theorem 6]{EM1} for radial functions in $L^1(\mathrm{SL}(2, \R))$. In their theorem they prove that if a function $f$ satisfies ``not-to-rapidly decay'' condition and nonvanishing condition on some {\em extended strip}, etc., then the ideal generated by $f$ is dense in $L^1(\mathrm{SL}(2, \R)//\mathrm{SO}(2))$. This has been  extended to all rank one semisimple Lie groups in the $K$-biinvariant setting (see \cite{Ben-0}, \cite{Sarkar-1998}) with the extended strip condition.  
The same theorem has been extended for hypergeometric transforms (in \cite{Liu}). Further references in this literature are  \cite{Sarkar-1997},  \cite{Sitaram-1980}, \cite{Sitaram-1988}, \cite{Naru-2009}, \cite{Naru-2011}, \cite{Pusti-2011}. In (\cite{Ben-1, Ben-2}), a genuine analogue of Wiener Tauberian theorem is proved for $\mathrm{SL}(2, \R)$ in the $K$-biinvariant setting without the extended strip condition. Following their method we have extended this result to all real rank one semisimple Lie groups in the $K$-biinvariant settings (\cite{PS}).   In this paper we prove Wiener Tauberian theorem for hypergeometric transforms in the exact strip.  Let $\alpha\geq\beta\geq-\frac{1}{2}$, $\alpha\neq-\frac{1}{2}$, $\rho=\alpha + \beta + 1$, $S_1=\{\lambda\in\C\mid |\Im\lambda|\leq \rho\}$ and
$$
\Delta_{\alpha, \beta}(t)=(2|\sinh t|)^{2\alpha + 1} (2\cosh t)^{2\beta + 1}, \,\,t\in\R.
$$
Let $L^1(\R, \Delta_{\alpha, \beta})_e$ be the collection of even functions $f$ such that $\|f\|_1:=\int_\R |f(t)| \Delta_{\alpha, \beta}(t)\,dt<\infty$. Also let $L^1_0(\R, \Delta_{\alpha, \beta})_e$ be the collection of  functions $f\in L^1(\R, \Delta_{\alpha, \beta})_e$  such that $$\int_\R f(t) \Delta_{\alpha, \beta}(t)\, dt=0.$$ For $f\in L^1(\R,\Delta_{\alpha,\beta})_e$, $\widehat{f}=\widehat{f}^{(\alpha,\beta)}$ denotes the Fourier-Jacobi transform of $f$ (see preliminaries for the definition). 

For any function $F$ on $S_1$, we define $$\delta_\infty^+(F)=-\limsup_{t\rightarrow\infty} e^{-\frac{\pi}{2\rho}t}\log|F(t)|, \,\,\,\,\,\textup{and} \,\,\,\,\,\delta_{ i\rho}(F)=\limsup_{t\rightarrow \rho-} (\rho-t)\log|F(i t)|.$$ 

Our theorem states that,

\begin{theorem}\label{main-theorem}
Let $\{f_\nu\mid \nu\in \Lambda\}$ be a collection of functions in $L^1(\R, \Delta_{\alpha, \beta})_e$ and $I$ be the smallest closed ideal in $L^1(\R, \Delta_{\alpha,\beta})_e$ containing $\{f_\nu\mid \nu\in \Lambda\}$. 
\begin{enumerate}
\item Suppose that element of $\{\what{f_\nu}\mid\nu\in \Lambda\}$ has no common zero in $S_1$ and $\inf_{\nu\in\Lambda}\delta_\infty^+(\what{f_\nu})=0$. Then $I=L^1(\R, \Delta_{\alpha, \beta})_e$.

\item Suppose that $\{\pm i\rho\}$ is the only common zero of $\{\what{f_\nu}\mid\nu\in \Lambda\}$ in $S_1$ and $\inf_{\nu\in\Lambda}\delta_\infty^+(\what{f_\nu})=\inf_{\nu\in\Lambda}\delta_{i\rho}(\what{f_\nu})=0$. Then $I=L^1_0(\R, \Delta_{\alpha, \beta})_e$.

\end{enumerate}
\end{theorem}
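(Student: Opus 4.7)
The plan is to adapt the resolvent-transform strategy of Ben Natan--Benyamini--Weit~\cite{Ben-1,Ben-2}, originally developed for $K$-biinvariant $L^1$ on $\mathrm{SL}(2,\R)$ and extended by the authors to all rank-one symmetric spaces in \cite{PS}, to the Jacobi convolution algebra $A=L^1(\R,\Delta_{\alpha,\beta})_e$. By Gelfand theory it suffices to show that under the hypotheses no maximal ideal of $A$ contains $I$ (respectively, no maximal ideal of $L^1_0$ strictly contains $I$). The regular maximal ideals are the kernels of the characters $f\mapsto\what f(\lambda)$ for $\lambda\in S_1/\{\pm 1\}$; the remaining \emph{singular} maximal ideals sit on the boundary lines $\Im\lambda=\pm\rho$, concentrated at $\pm i\rho$ and ``at infinity'' along these lines, and the quantities $\delta_\infty^+$ and $\delta_{i\rho}$ are designed precisely to detect them.

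For part~(1), I argue by contradiction: assume $I\subsetneq A$, pick $g\in A\setminus I$, and choose $T\in A^*$ with $T|_I=0$ and $T(g)\neq 0$. A local Wiener division lemma inside the open strip, derived from the asymptotic behaviour of the Jacobi functions $\phi_\lambda$, shows that $T$ annihilates every element of $A$ whose Jacobi transform vanishes on a neighbourhood of the common zero set $Z=\bigcap_\nu\{\what{f_\nu}=0\}\subset S_1$. Since $Z=\emptyset$ by hypothesis, the ``support'' of $T$ meets $S_1$ in the empty set, so only singular characters on $\partial S_1$ can remain; these must still be excluded.

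To rule these out I build the resolvent transform: for $\zeta\in\C\setminus S_1$ set
\[
R_\nu(\zeta)=T(h_{\nu,\zeta}),
\]
where $h_{\nu,\zeta}\in A$ has Jacobi transform $(\what g(\lambda)-c_\nu(\zeta)\what{f_\nu}(\lambda))/(\lambda^2-\zeta^2)$, with $c_\nu(\zeta)=\what g(\zeta)/\what{f_\nu}(\zeta)$ chosen so that the numerator vanishes at $\lambda=\pm\zeta$. Since $T$ kills $f_\nu$, the two a priori different analytic representations of $R_\nu$ on $\Im\zeta>\rho$ and $\Im\zeta<-\rho$ glue, via the non-vanishing of $\what{f_\nu}$, into a single function holomorphic across $\{\zeta:\what{f_\nu}(\zeta)\neq 0\}$; varying $\nu$ patches these into a function $R$ entire on $\C\setminus Z=\C$. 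A Phragm\'en--Lindel\"of / Ahlfors--Heins argument, calibrated to the Jacobi growth rate $e^{\pi|\Re\zeta|/(2\rho)}$ on $\partial S_1$ that underlies the definition of $\delta_\infty^+$, together with the hypothesis $\inf_\nu\delta_\infty^+(\what{f_\nu})=0$, forces $R\equiv 0$; inverting the construction via the Jacobi inversion formula on $\partial S_1$ then yields $T(g)=0$, the sought contradiction.

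The main obstacle, as in the rank-one case, is this boundary analysis: one must control $R(\zeta)$ as $|\Re\zeta|\to\infty$ along $\Im\zeta=\pm\rho$, and the Jacobi $c$-function contributes the exponential factor that fixes the exponent $\pi/(2\rho)$, which has to be matched against a Denjoy--Carleman quasi-analyticity threshold. This is where Jacobi-specific estimates—for $\phi_\lambda$, for the $c$-function, and for the Abel-type transform giving local inversion—must replace the Harish-Chandra estimates used in \cite{PS}. Part~(2) then reduces to~(1) by dividing $\what{f_\nu}$ by $\rho^2+\lambda^2$: the extra hypothesis $\inf_\nu\delta_{i\rho}(\what{f_\nu})=0$ controls the order of vanishing at $\pm i\rho$ so that the quotients satisfy the hypotheses of~(1) inside the codimension-one subalgebra $L^1_0(\R,\Delta_{\alpha,\beta})_e$, giving $I=L^1_0(\R,\Delta_{\alpha,\beta})_e$.
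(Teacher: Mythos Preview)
Your overall plan---duality, resolvent transform, Phragm\'en--Lindel\"of on the strip with the exponent $\pi/(2\rho)$---is exactly the machine the paper uses, and for part~(1) this sketch is an adequate summary of what is done (via the concrete functions $b_\lambda(t)=\tfrac{i}{4\lambda c(-\lambda)}\Phi_\lambda(t)$ and $T_\lambda f=\widehat f(\lambda)b_\lambda-f\ast b_\lambda$, which realise your $h_{\nu,\zeta}$ explicitly). The local Wiener division step you insert is not needed: once the resolvent $\mathcal R[g]$ is shown to be entire and to vanish along the imaginary axis at infinity, Liouville and the density of $\{b_\lambda:\Im\lambda>\rho\}$ finish the argument directly.

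Your treatment of part~(2), however, has a genuine gap. You propose to reduce to part~(1) by dividing $\widehat{f_\nu}$ by $\lambda^2+\rho^2$, claiming that $\inf_\nu\delta_{i\rho}(\widehat{f_\nu})=0$ ``controls the order of vanishing at $\pm i\rho$''. It does not: $\delta_{i\rho}(F)=\limsup_{t\to\rho^-}(\rho-t)\log|F(it)|$ is a \emph{non-quasi-analyticity} condition on the decay of $|F|$ near the boundary, and is satisfied for example by any $F$ vanishing to finite order at $i\rho$, so it gives no lower bound on that order. Moreover $(\lambda^2+\rho^2)^{-1}$ is not the Jacobi transform of an $L^1$ function (indeed $b_\lambda\in L^1$ only for $\Im\lambda>\rho$), so the quotients need not lie in the image of the transform, and there is no subalgebra to which part~(1) applies verbatim.

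The paper's route for~(2) is different and does not factor through~(1). One takes $g\in L^\infty$ annihilating $I$ and forms the same resolvent $\mathcal R[g]$, now holomorphic only on $\C\setminus\{\pm i\rho\}$. The hypothesis $\inf_\nu\delta_{i\rho}(\widehat{f_\nu})=0$ enters through a separate complex-analysis lemma (a variant of Dahlner's pole theorem) which upgrades the estimate $|\widehat{f_\nu}(z)\mathcal R[g](z)|\le C(1+|z|)^N/d(z,\partial S_1)$ to the conclusion that $\mathcal R[g]$ has at most \emph{simple} poles at $\pm i\rho$. Combined with the log--log theorem at infinity and the decay of $\|b_\lambda\|_1$ along the imaginary axis, this forces $\mathcal R[g](z)=a/(z^2+\rho^2)=-a\,\widehat{b_z}(i\rho)$; density of the $b_z$ then gives $g=-\bar a\,m$ with $m$ the character at $i\rho$, so $g$ annihilates $L^1_0$. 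The point is that $\delta_{i\rho}$ constrains the singularity of the resolvent, not the zeros of the generators.
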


Most of the part of the proof of this theorem similar to our earlier paper (\cite{PS}). Theorefore we state such results without any proof. The proofs will follow as in \cite{PS}.

As an application of this theorem we prove  Frustenburg type theorem on Harmonic functions, following \cite{Ben-2}.

\section{Preliminaries}
Most of our notation related to the hypergeometric functions is standard and  can be found for example in
\cite{Koornwinder}. 
We shall follow the standard practice of using the letter $C$ for constants, whose value may change from one line to another. Everywhere in this article the symbol
$f_1\asymp f_2$ for two positive expressions $f_1$ and $f_2$ means that there are positive constants $C_1, C_2$ such
that $C_1f_1\leq f_2\leq C_2f_1$. For a complex number $z$, we will use $\Re z$ and $\Im z$ to
denote respectively the real and imaginary parts of $z$. 

A Jacobi function $\phi_\lambda^{(\alpha, \beta)} (\alpha, \beta, \lambda\in\C, \alpha\not=-1, -2, \cdots)$ is defined as the even $C^\infty$ function on $\R$ such that $\phi_\lambda^{(\alpha, \beta)}(0)=1$ and it satisfies the following differential equation 
\begin{equation}\label{eqn-1}
\left(\frac{d^2}{dt^2} + ((2\alpha +1)\coth t + (2\beta +1)\tanh t)\frac{d}{dt} + \lambda^2 + (\alpha +\beta +1)^2\right)\phi_\lambda^{(\alpha, \beta)}(t)=0.
\end{equation}
In this paper we shall assume that $\alpha\geq \beta> -\frac 12$ and $\alpha\not=-\frac 12$. This Jacobi function can be written as hypergeometric function:

\begin{equation}
\phi_\lambda^{(\alpha, \beta)}(t)=_2F_1\left(\frac{\alpha +\beta + 1-i\lambda}{2}, \frac{\alpha + \beta + 1+ i\lambda}{2}; \alpha +1; -\sinh^2t\right).
\end{equation}

The hypergeometric function has the following integral representation for $\Re c>\Re b>0$,
\begin{eqnarray}\label{integral representation of hypergeometric function}
_2F_1(a,b;c;z)=\frac{\Gamma(c)}{\Gamma(b)\Gamma(c-b)}\int_0^1 s^{b-1}(1-s)^{c-b-1}(1-sz)^{-a}ds,\hspace{3mm}z\in\C\setminus[1, \infty).
\end{eqnarray}

Let $$L_{\alpha, \beta}=\frac{d^2}{dt^2} + \left((2\alpha + 1)\coth t + (2\beta + 1)\tanh t\right)\frac{d}{dt}.$$ Then rewriting (\ref{eqn-1}) we get that $\phi_\lambda^{(\alpha, \beta)}$ is the unique even $C^\infty$ function on $\R$ such that $\phi_\lambda^{(\alpha, \beta)}(0)=1$ and 
\begin{equation}\label{eqn-2}
(L_{\alpha, \beta} +\lambda^2 + \rho^2)\phi_\lambda^{(\alpha, \beta)}=0.
\end{equation}
Let $T^{(\alpha, \beta)}$ be the differential-difference operator defined by $$T^{(\alpha, \beta)}f(t)=f'(t) + \left((2\alpha + 1)\coth t + (2\beta + 1)\tanh t\right) \frac{f(t)-f(-t)}{2}-\rho f(-t), t\in\R.$$
The Heckman-Opdam hypergeometric functions $G_\lambda^{(\alpha, \beta)}$ on $\R$ are normalised eigenfunctions:
$$T^{(\alpha, \beta)}G_\lambda^{(\alpha, \beta)}=i\lambda G_\lambda^{(\alpha, \beta)}.$$
The functions $G_\lambda^{(\alpha, \beta)}$ are related to the Jacobi functions by 
\begin{equation}
G_\lambda^{(\alpha, \beta)}(x)=\phi_\lambda^{(\alpha, \beta)}(x) + \frac{\rho + i\lambda}{4(\alpha + 1)}\sinh 2x \phi_\lambda^{(\alpha +1 , \beta + 1)}(x).
\end{equation}
Then we have, $$\phi_\lambda^{(\alpha, \beta)}(x)=\frac{G_\lambda^{(\alpha, \beta)}(x) + G_\lambda^{(\alpha, \beta)}(-x)}{2}.$$
For $\lambda\not=-i, -2i, \cdots $, there is another solution $\Phi_\lambda^{(\alpha, \beta)}$ of (\ref{eqn-2}) on $(0, \infty)$ is given by \begin{eqnarray}\label{formula-Phi-1}
\Phi_\lambda^{(\alpha, \beta)}(t)&=& (2\cosh t)^{i\lambda-\rho}  \,_2F_1(\frac{\rho-i\lambda}{2}, \frac{\alpha-\beta +1-i\lambda}{2}; 1-i\lambda; \cosh^{-2} t) \\
&=& (2\sinh t)^{i\lambda-\rho}  \,_2F_1(\frac{\rho-i\lambda}{2}, \frac{-\alpha+\beta +1-i\lambda}{2} ; 1-i\lambda; -\sinh^{-2} t)
\label{formula-Phi-2}
\end{eqnarray}
This solution has singularity at $t=0$. For $t\rightarrow\infty$, it satisfies\begin{equation}\label{est-Phi}
\Phi_\lambda^{(\alpha, \beta)}(t)=e^{(i\lambda-\rho)t}(1 + O(1)).
\end{equation}
For $\lambda\in \C\setminus i\Z$, $\Phi_\lambda^{(\alpha, \beta)}$ and $\Phi_{-\lambda}^{(\alpha, \beta)}$ are two linearly independent solutions of (\ref{eqn-2}). So $\phi_\lambda^{(\alpha, \beta)}$ is a linear combination of both $\Phi_\lambda^{(\alpha, \beta)}$ and $\Phi_{-\lambda}^{(\alpha, \beta)}$. We have $$\phi_\lambda^{(\alpha, \beta)}=c(\lambda)\Phi_\lambda^{(\alpha, \beta)} + c(-\lambda)\Phi_{-\lambda}^{(\alpha, \beta)}$$ where $c(\lambda)$ is the Harish-Chandra $c$-function given by $$c(\lambda)=c_{(\alpha, \beta)}(\lambda)=\frac{2^{\rho-i\lambda} \Gamma(\alpha +1)\Gamma(i\lambda)}{\Gamma(\frac{\rho+ i\lambda}{2}) \Gamma(\frac{i\lambda + \alpha-\beta +1}{2})}.$$
It is normalized such that $c(-i\rho)=1$.
Hence, for $\Im\lambda<0$ and as $t\rightarrow\infty$, 
\begin{equation}\label{est-phi}
\phi_\lambda^{(\alpha, \beta)}(t)=c(\lambda)e^{(i\lambda-\rho)t}(1 + O(1)).
\end{equation}

We let $\Delta_{\alpha, \beta}(t)=(2|\sinh t|)^{2\alpha + 1} (2\cosh t)^{2\beta + 1}, t\in\R$.
The Fourier-Jacobi  transform of a suitable even function $f$ on $\R$ is defined by $$\what{f}^{(\alpha, \beta)}(\lambda)=\int_\R f(t)\phi_\lambda^{(\alpha, \beta)}(t) \Delta_{(\alpha, \beta)}(t)\, dt=2\int_0^\infty f(t)\phi_\lambda^{(\alpha, \beta)}(t) \Delta_{(\alpha, \beta)}(t)\, dt$$ for all complex numbers $\lambda$, for which the right hand side is well-defined. We point out that this definition coincides exactly with the group Fourier transform when $(\alpha, \beta)$ arises from geometric cases. 

The Fourier-Jacobi  transform of an even complex Borel measure $\mu$  is defined by $$\what{\mu}^{(\alpha, \beta)}(\lambda)=\int_\R \phi_\lambda^{(\alpha, \beta)}(t) \, d\mu(t).$$ for $\lambda\in S_1$. 

For  $f\in L^1(\R, \Delta_{\alpha, \beta})_e$, we have $\lim_{|\lambda|\rightarrow \infty}\what{f}^{(\alpha, \beta)}(\lambda)=0$ and $\lim_{|\lambda|\rightarrow \infty}\what{\mu}^{(\alpha, \beta)}(\lambda)=\mu(\{0\})$.

Also we have the following inversion formula for suitable even function $f$ on $\R$:
$$f(t)=\frac{1}{4\pi}\int_0^\infty\what{f}^{(\alpha, \beta)}(\lambda)\phi_\lambda^{(\alpha, \beta)}(t)\left|c_{(\alpha, \beta)}(\lambda)\right|^{-2}\,d\lambda.$$

The translation  of  a suitable even  function $f$ on $\R$ is given by (for all $s, t\in\R$), $$\tau_s^{(\alpha, \beta)}f(t)=\int_0^1\int_0^\pi f \left(\cosh^{-1}\left|\cosh s \cosh t +r e^{i\psi}\sinh s \sinh t\right|\right)\, dm_{\alpha, \beta}(r, \psi)$$ where the measure $dm_{\alpha, \beta}(r, \psi)$ is given by $$dm_{\alpha, \beta}(r, \psi)=\frac{2\Gamma(\alpha + 1)}{\Gamma(\frac 12)\Gamma(\alpha-\beta) \Gamma(\beta + \frac 12)} (1-r^2)^{\alpha-\beta-1} (r\sin \psi)^{2\beta} r\,dr\,d\psi$$ for $\alpha>\beta>-\frac 12$.

For $\alpha=\beta>-\frac 12$ the measure degenerates into  $$dm_{\alpha, \alpha} (r, \psi)=\frac{\Gamma(\alpha + 1)}{\Gamma(\frac 12)\Gamma(\alpha + \frac 12)}(\sin \psi)^{2\alpha}\,d\psi \, d\delta_0(r)$$ and for $\alpha>\beta=-\frac 12$ into $$dm_{\alpha, -\frac 12} (r, \psi)=\frac{2\Gamma(\alpha + 1)}{\Gamma(\frac 12)\Gamma(\alpha + \frac 12)}(1-r^2)^{\alpha-\frac 12}\,dr \frac 12 d(\delta_0 + \delta_\pi)(\psi).$$
Then it easy to check that 
\begin{enumerate}
\item $\tau_s^{(\alpha, \beta)}f(t)=\tau_t^{(\alpha, \beta)}f(s)$
\item $\tau_0^{(\alpha, \beta)}f=f$
\item $\tau_{-s}^{(\alpha, \beta)}f(t)=\tau_s^{(\alpha, \beta)}f(-t)$
\item $\tau_s^{(\alpha, \beta)}\tau_t^{(\alpha, \beta)}=\tau_t^{(\alpha, \beta)}\tau_s^{(\alpha, \beta)}$.
\item $\tau_s^{(\alpha, \beta)}\phi_\lambda^{(\alpha, \beta)}(t)=\phi_\lambda^{(\alpha, \beta)}(s)\phi_\lambda^{(\alpha, \beta)}(t)$.
\item For suitable even function $f$ on $\R$, we have $\what{\tau_s^{(\alpha, \beta)}f}(\lambda)=\phi_\lambda^{(\alpha, \beta)}(s) \what{f}^{(\alpha, \beta)}(\lambda)$.

\end{enumerate}
For suitable even functions $f$ and $g$ the convolution on $\R$ is defined by \begin{equation}\label{convolution}
f\ast_{(\alpha, \beta)} g(t)=\int_\R \left(\tau_s^{(\alpha, \beta)}f\right)(t) g(s) \Delta_{(\alpha, \beta)}(s)\,ds.
\end{equation}
Also the convolution of a sutibale even function $f$ and an even complex measure $\mu$ is defined by \begin{equation}\label{convolution-measure}
f\ast_{(\alpha, \beta)} \mu(t)=\int_\R \left(\tau_s^{(\alpha, \beta)}f\right)(t) \,d\mu(s).
\end{equation}
It is well known that $$\what{f\ast_{(\alpha, \beta)} g}(\lambda)=\what{f}(\lambda)\what{g}(\lambda)$$
and $$\|f\ast_{(\alpha, \beta)} g\|_1\leq \|f\|_1  \|g\|_1.$$

\section{The functions $b_\lambda$}
\noindent Let $\mathbb{C_+}=\{z\in\C\mid \Im z>0\}$  be the open upper half plane in $\mathbb{C}$. We fix $\alpha\geq \beta\geq -\frac 12, \alpha\not=-\frac 12$. To make expressions simplier, we shall omit indices $(\alpha, \beta)$ from $\Phi_\lambda^{(\alpha, \beta)}, \phi_\lambda^{(\alpha, \beta)},\Delta_{(\alpha, \beta)}, c_{(\alpha, \beta)}(\lambda), \cdots $etc. and write simply them as $\Phi_\lambda, \phi_\lambda, \Delta, c(\lambda), \cdots $ etc. respectively.

For $\lambda\in \mathbb{C}_+$, we define 
\begin{eqnarray}\label{definition of b-lambda}
b_\lambda(t):=\frac{i}{4\lambda c(-\lambda)}\Phi_\lambda(t), t>0
\end{eqnarray}
where $c$ is the Harish-Chandra $c$-function. We extend $b_\lambda$ evenly on $\R\setminus\{0\}$.
The function $b_\lambda$ satisfies the following properties. 
\begin{enumerate}
\item  There is a positive constant $C$ and a natural number $N$ such that for all $t\in(0,1/2]$, 
\begin{eqnarray*}
|b_\lambda(t)|\leq \begin{cases}C (1+|\lambda|)^Nt^{-2\alpha},\hspace{3mm}\textup{if}\hspace{1mm} \alpha\not=0\\
C \log \frac{1}{t}\hspace{20.5mm}\textup{if}\hspace{1mm}\alpha=0.
\end{cases}
\end{eqnarray*}
\item There is a positive constant $C$ and a natural number $M$ such that for all $t\in [1/2,\infty]$,
$$
|b_\lambda(a_t)|\leq C (1+|\lambda|)^Me^{-(\Im\lambda+\rho)t}.
$$
\item $b_\lambda$ can be written as a sum of $L^1$ and $L^p (p<2)$ functions.
\item  $b_\lambda\in L^1(\R, \Delta)_e$ if and only if $\Im\lambda>\rho$  and $\|b_\lambda\|_1\leq C\frac{(1+|\lambda|)^K}{\Im\lambda-\rho}$ for some  $K>0$.  Also, $||b_\lambda||_1\rightarrow 0$ if $\lambda\rightarrow \infty$ along the positive imaginary axis.
\item For $\lambda\in\C_+$, $\what{b_\lambda}(\xi)=\frac{1}{\xi^2-\lambda^2}, \xi\in\R$.
\item Span$\{b_\lambda\mid\Im\lambda>\rho\}$ is dense in $L^1(\R, \Delta)_e$.
\end{enumerate}
Except $(5)$, others can be proved as in \cite{PS}. So we present the proof of $(5)$ (cf. \cite[p. 128]{Dijk}).

\begin{lemma}\label{lemma-spherical transform of b-lambda}
Let $\lambda\in \mathbb{C}_+$. Then $\widehat{b}_\lambda(\xi)=\frac{1}{\xi^2-\lambda^2}$ for all $\xi\in \mathbb{R}$.
\end{lemma}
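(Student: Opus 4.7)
The plan is to interpret $b_\lambda$ as the resolvent kernel of the Sturm--Liouville operator $L_{\alpha,\beta}=\Delta^{-1}\tfrac{d}{dt}\bigl(\Delta\tfrac{d}{dt}\bigr)$. By~\eqref{eqn-2}, $\phi_\xi$ and $\Phi_\lambda$ are eigenfunctions of $L_{\alpha,\beta}$ with eigenvalues $-(\xi^2+\rho^2)$ and $-(\lambda^2+\rho^2)$ (the second only on $(0,\infty)$, since $\Phi_\lambda$ is singular at $0$). Writing out the Fourier--Jacobi transform using the evenness of $b_\lambda$,
$$\widehat{b}_\lambda(\xi)=\frac{i}{2\lambda\,c(-\lambda)}\int_0^{\infty}\Phi_\lambda(t)\phi_\xi(t)\Delta(t)\,dt,$$
and applying Lagrange's identity on $(\epsilon,T)$ to the pair $(\Phi_\lambda,\phi_\xi)$ gives
$$(\xi^2-\lambda^2)\int_\epsilon^{T}\Phi_\lambda\,\phi_\xi\,\Delta\,dt=\Bigl[\Delta\bigl(\Phi_\lambda'\phi_\xi-\Phi_\lambda\phi_\xi'\bigr)\Bigr]_{\epsilon}^{T}.$$
The whole task reduces to evaluating this boundary term as $\epsilon\to 0^+$ and $T\to\infty$.

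At $T\to\infty$ the expression vanishes: by~\eqref{est-Phi} one has $\Phi_\lambda(t)=e^{(i\lambda-\rho)t}(1+o(1))$, with a matching estimate for $\Phi_\lambda'$, while $\phi_\xi$ and $\phi_\xi'$ are $O(e^{-\rho t})$ for real $\xi$ and $\Delta(t)\asymp e^{2\rho t}$; the bracketed quantity is therefore $O(e^{i\lambda t})\to 0$ since $\Im\lambda>0$. At $\epsilon\to 0^+$ one uses the Frobenius structure of the Jacobi ODE at the origin, whose indicial exponents are $0$ and $-2\alpha$. Decomposing $\Phi_\lambda=A(\lambda)\phi_\lambda+B(\lambda)\psi_\lambda$, where $\psi_\lambda(t)\sim t^{-2\alpha}$ (or $\log t$ when $\alpha=0$) is the singular Frobenius companion, the $A(\lambda)$-piece contributes nothing to the limit (a direct expansion gives $\Delta\cdot W[\phi_\lambda,\phi_\xi]=O(t^{2\alpha+2})$), while the $B(\lambda)$-piece produces a finite, $\xi$-independent value proportional to $B(\lambda)$.

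The coefficient $B(\lambda)$ is then identified from the connection formula $\phi_\lambda=c(\lambda)\Phi_\lambda+c(-\lambda)\Phi_{-\lambda}$ combined with a Kummer-type transformation applied to~\eqref{formula-Phi-2}; after substituting the explicit form of the Harish--Chandra $c$-function, the boundary constant is precisely what is needed for the prefactor $i/(2\lambda c(-\lambda))$ to collapse everything to $\widehat{b}_\lambda(\xi)=1/(\xi^2-\lambda^2)$. Equivalently, the computation verifies that $b_\lambda$ satisfies $(L_{\alpha,\beta}+\lambda^2+\rho^2)b_\lambda=-\delta_0$ as a distribution against the measure $\Delta\,dt$, whence the claim follows immediately upon taking Fourier--Jacobi transforms and using $\widehat{\delta_0}(\xi)=\phi_\xi(0)=1$. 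The main obstacle is this Frobenius bookkeeping at the origin. A cleaner shortcut is to first prove the identity for $\Im\lambda>\rho$, where $b_\lambda\in L^1(\R,\Delta)_e$ by property~(4) and the Lagrange computation is absolutely convergent, and then extend to all $\lambda\in\mathbb{C}_+$ by analytic continuation, using property~(3) to make sense of $\widehat{b}_\lambda$ throughout.
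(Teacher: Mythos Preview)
Your approach and the paper's coincide in the essential step: both compute $\widehat{b}_\lambda(\xi)$ via the Lagrange identity for the pair $(\Phi_\lambda,\phi_\xi)$, reducing the problem to evaluating the weighted Wronskian $\Delta(\Phi_\lambda'\phi_\xi-\Phi_\lambda\phi_\xi')$ at the two endpoints. The difference lies in how the boundary term at $t\to 0^+$ is evaluated. You decompose $\Phi_\lambda=A(\lambda)\phi_\lambda+B(\lambda)\psi_\lambda$ into Frobenius solutions at the origin and then have to identify $B(\lambda)$ from a Kummer connection formula and the explicit $c$-function. The paper instead observes that $[\phi_\lambda,\Phi_\lambda]:=\Delta(\phi_\lambda\Phi_\lambda'-\phi_\lambda'\Phi_\lambda)$ is \emph{constant} in $t$ (same eigenvalue), computes this constant at $t\to\infty$ where the asymptotics are transparent, and then writes $[\phi_\xi,\Phi_\lambda]=[\phi_\xi-\phi_\xi(0)\phi_\lambda,\Phi_\lambda]+\phi_\xi(0)[\phi_\lambda,\Phi_\lambda]$; the first summand vanishes at $0^+$ by an easy L'H\^opital argument. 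This sidesteps all Frobenius bookkeeping and the connection-formula computation entirely, which is precisely the part you flag as ``the main obstacle''.

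Two smaller points. First, your proposed shortcut (prove it for $\Im\lambda>\rho$ and analytically continue) does not in fact bypass the computation at $t=0^+$: the integral $\int_0^\infty \Phi_\lambda\phi_\xi\Delta\,dt$ is already absolutely convergent for every $\lambda\in\mathbb{C}_+$, and the boundary term at the origin is present regardless of $\Im\lambda$. Second, at $t\to\infty$ you assert ``a matching estimate for $\Phi_\lambda'$'' from~\eqref{est-Phi}; asymptotics of a function do not automatically transfer to its derivative. The paper handles both endpoints by rewriting the Wronskian as $-\Delta\Phi_\lambda^2(\phi_\xi/\Phi_\lambda)'$ and applying L'H\^opital to the ratio $\phi_\xi/\Phi_\lambda$, which cleanly avoids differentiating the asymptotic expansions. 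Your argument is fixable on this point, but as written it is a gap.
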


\begin{proof}
For two smooth functions $f$ and $g$ on $(0,\infty)$, we define 
$$
[f,g](t)=\Delta(t)\left[f(t)g^\prime(t)-f^\prime(t)g(t)\right],\,\,\,t>0.
$$
An easy calculation shows that $[f,g]^\prime(t)=(Lf\cdot g-f\cdot Lg)(t)\Delta(t)$. Therefore, for any $b>a>0$, we have
\begin{eqnarray}\label{e.0}
\int_a^b (Lf\cdot g-f\cdot Lg)(t)\Delta(t)=[f,g](b)-[f,g](a).
\end{eqnarray}
If $f=\phi_\lambda$ and $g=\Phi_\lambda$, then the left-hand side of the above equation is zero for all $b>a>0$, so that $[\phi_\lambda,\Phi_\lambda]$ is a (finite) constant on $(0,\infty)$, and hence
\begin{eqnarray}\label{e.01}
[\phi_\lambda,\Phi_\lambda](\cdot)=\lim_{t\rightarrow\infty}[\phi_\lambda,\Phi_\lambda]=-\lim_{t\rightarrow\infty}
\Delta(t)\left(\Phi_\lambda(t)\right)^2
\left(\frac{\phi_\lambda}{\Phi_\lambda}\right)^\prime(t)=
-\lim_{t\rightarrow\infty}
e^{2i\lambda t}\left(\frac{\phi_\lambda}{\Phi_\lambda}\right)^\prime(t)
\end{eqnarray}
by the asymptotic behaviors of $\Delta$ and $\Phi_\lambda$ at $\infty$. Again, by the asymptotic behaviors of $\phi_\lambda$ and $\Phi_\lambda$ at $\infty$, we have
$$
\lim_{t\rightarrow\infty}\frac{\frac{\phi_\lambda}{\Phi_\lambda}(t)}{e^{-2i\lambda t}}=c(-\lambda).
$$
Since, by (\ref{e.01}), 
$$
\lim_{t\rightarrow\infty}\frac{\left(\frac{\phi_\lambda}{\Phi_\lambda}\right)^\prime(t)}{-2i\lambda e^{-2i\lambda t}}
$$ 
exists, we can apply L'Hospital's rule to conclude that
$$
\lim_{t\rightarrow\infty}\frac{\left(\frac{\phi_\lambda}{\Phi_\lambda}\right)^\prime(t)}{-2i\lambda e^{-2i\lambda t}}=c(-\lambda).
$$
Hence we get $[\phi_\lambda,\Phi_\lambda](\cdot)=2i\lambda c(-\lambda)$.

Now, if $f$ is an even smooth function on $\mathbb{R}$ with $f(0)=0$, we claim that
$$
\lim_{t\rightarrow 0^+}[f,\Phi_\lambda](t)=0.
$$
To prove the claim, first note that we can assume $f$ to be compactly supported. Then with this $f$ and $g=\Phi_\lambda$, the equation \ref{e.0} (for large $b$ and $a=t\rightarrow 0^+$) implies that $\lim_{t\rightarrow 0^+}[f,\Phi_\lambda](t)$ exists.
Also we have
\begin{eqnarray*}
\lim_{t\rightarrow 0^+}[f,\Phi_\lambda](t)=-\lim_{t\rightarrow 0^+}
\Delta(t)\left(\Phi_\lambda(t)\right)^2
\left(\frac{f}{\Phi_\lambda}\right)^\prime(t)=
\begin{cases}
-\lim_{t\rightarrow 0^+}
2^{2\rho}Ct^{-2\alpha+1}\left(\frac{f}{\Phi_\lambda}\right)^\prime(t),\,\,\,\,\,\,\,\,\textup{if}\,\,\alpha\neq 0\\
-\lim_{t\rightarrow 0^+}
2^{2\rho}Ct\left(\log\frac{1}{t}\right)^2\left(\frac{f}{\Phi_\lambda}\right)^\prime(t),\,\,\,\textup{if}\,\,\alpha= 0
\end{cases}
\end{eqnarray*}
since $\Delta(t)\asymp 2^{2\rho}t^{2\alpha+1}$ and
\begin{eqnarray}
\Phi_\lambda(t)\asymp
\begin{cases} t^{-2\alpha}\,\,\,\textup{if}\,\,\alpha\neq 0\\
\log\frac{1}{t}\,\,\,\textup{if}\,\,\alpha= 0
\end{cases}
\end{eqnarray}
as $t\rightarrow 0+$. Therefore, by an application of L'Hospital's rule, the claim follows.
But, if $f(0)\neq 0$, writing
$$
[f,\Phi_\lambda]=[f-f(0)\phi_\lambda,\Phi_\lambda]+f(0)[\phi_\lambda,\Phi_\lambda],
$$
we can conclude that 
\begin{eqnarray}\label{e.1}
\lim_{t\rightarrow 0^+}[f,\Phi_\lambda](t)=2i\lambda c(-\lambda)f(0).
\end{eqnarray}

Now fix a real number $\xi$. Putting $f=\phi_\xi$ and $g=\Phi_\lambda$ in \ref{e.0}, it follows that 
$$
\int_{a}^b\Phi_\lambda(t)\phi_\xi(t)\Delta(t)dt=
\frac{1}{\lambda^2-\xi^2}\big([\phi_\xi,\Phi_\lambda](b)-
[\phi_\xi,\Phi_\lambda](a)\big).
$$
Taking limit as $a\rightarrow 0^+$, we get, by \ref{e.1},
\begin{eqnarray*}
\int_{0}^b\Phi_\lambda(t)\phi_\xi(t)\Delta(t)dt=
\frac{1}{\lambda^2-\xi^2}\big([\phi_\xi,\Phi_\lambda](b)-
2i\lambda c(-\lambda)\big).
\end{eqnarray*}
Therefore, to complete the proof it is enough to show that $[\phi_\xi,\Phi_\lambda](b)\rightarrow 0$ as $b\rightarrow\infty$. First note that the existence of (finite) limit is confirmed by the above equation itself. As in \ref{e.01}, we can write
$$
\lim_{b\rightarrow\infty}[\phi_\xi,\Phi_\lambda](b)=
\lim_{b\rightarrow\infty}
e^{2i\lambda b}\left(\frac{\phi_\xi}{\Phi_\lambda}\right)^\prime(b).
$$
By the asymptotic behavior of $\phi_\xi$ and $\Phi_\lambda$,
$$
\lim_{b\rightarrow\infty}\frac{\frac{\phi_\xi}{\Phi_\lambda}(b)}{e^{-2i\lambda b}}=0.
$$
Therefore, by L'Hospital rule,
$$
\lim_{b\rightarrow\infty}\frac{\left(\frac{\phi_\xi}{\Phi_\lambda}\right)^\prime(b)}{-2i\lambda e^{-2i\lambda b}}=0,
$$
and hence $\lim_{b\rightarrow\infty}[\phi_\xi,\Phi_\lambda](b)=0$
 as required to prove.
 \end{proof}

\section{The functions $T_\lambda f$}
Let $f\in L^1(\R, \Delta)_e$ . For each $\lambda$, with $0<\Im\lambda<\rho$, we define
\begin{eqnarray}\label{definition of T-lambda-f}
T_\lambda f:=\widehat{f}(\lambda)b_\lambda -f*b_\lambda.
\end{eqnarray}
Since $b_\lambda$ can be written as a sum of $L^1$ and $L^p$ ($p<2$) function, $T_\lambda f$ is well-defined; in fact it also has the same form i.e. can be written as a sum of $L^1$ and $L^p$ function. In particular its spherical transform is a continuous function on $\mathbb{R}$. As an easy consequence of Lemma \ref{lemma-spherical transform of b-lambda} we get, 
for $0<\Im\lambda<\rho$ and $f\in L^1(\R, \Delta)_e$, 
$$
\widehat{T_\lambda f}(\xi)=\frac{\widehat{f}(\lambda)-\widehat{f}(\xi)}{\xi^2-\lambda^2},\hspace{3mm}\textup{for all}\hspace{1mm} \xi\in\mathbb{R}.
$$

\begin{lemma}\label{lemma-functional equation of b-lambda}
Let $\lambda\in\mathbb{C}_+$. Then
\begin{eqnarray*}
\tau_sb_\lambda(t)=
\begin{cases}
b_\lambda(t)\phi_\lambda 
(s)\hspace{3mm}\textup{if}
\hspace{1mm}t> s\geq 0,\\
b_\lambda(s)\phi_\lambda(t)\hspace{3mm}\textup{if}
\hspace{1mm}s > t\geq 0.
\end{cases}
\end{eqnarray*}
\end{lemma}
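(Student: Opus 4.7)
My strategy is to fix $t>0$ and read the identity as a Cauchy problem in $s$: I will show that $v(s):=\tau_s b_\lambda(t)$, viewed as a function of $s$ on $(-t,t)$, is smooth, even, satisfies $v(0)=b_\lambda(t)$, and solves the eigenequation $(L_{\alpha,\beta}+\lambda^2+\rho^2)v=0$. Since $\phi_\lambda$ is by definition the unique even smooth solution of the same eigenequation normalized by $\phi_\lambda(0)=1$, this will force $v(s)=\phi_\lambda(s)\,b_\lambda(t)$ on $(-t,t)$, which is the case $t>s\geq 0$. The case $s>t\geq 0$ then follows by applying the first case with the roles of $s$ and $t$ interchanged, combined with property (1), $\tau_s b_\lambda(t)=\tau_t b_\lambda(s)$.

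For the smoothness, initial value, and evenness I would proceed as follows. For $|s|<t$ one has
\[
\cosh(t-|s|)\leq\bigl|\cosh s\cosh t+re^{i\psi}\sinh s\sinh t\bigr|\leq\cosh(t+|s|),
\]
so the argument of $b_\lambda$ in the integral defining $\tau_s b_\lambda(t)$ lies in $[t-|s|,t+|s|]\subset(0,\infty)$, i.e., in the smooth locus of $b_\lambda$. Differentiation under the integral sign is therefore justified and $v$ is smooth on $(-t,t)$; property (2) gives $v(0)=b_\lambda(t)$; and property (3) combined with the symmetry $\psi\mapsto\pi-\psi$ of $dm_{\alpha,\beta}$ (which shows $\tau_s f(-t)=\tau_s f(t)$ for even $f$) yields $v(-s)=\tau_{-s}b_\lambda(t)=\tau_s b_\lambda(-t)=v(s)$.

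The eigenequation in $s$ is the crux. I would prove it by an approximation. Pick a smooth even function $f_\epsilon$ on $\R$ that agrees with $b_\lambda$ on $[\epsilon,\infty)$. Because the measure $dm_{\alpha,\beta}$ is tailored so that $\tau$ commutes with $L_{\alpha,\beta}$ in the classical sense on smooth even functions (this is the same calculation that underlies the product formula (5) for $\phi_\lambda$), one has $L_s\tau_t f_\epsilon(s)=\tau_t(Lf_\epsilon)(s)$. For $|s|<t-\epsilon$ the integrand only probes $f_\epsilon$ at arguments $\geq\epsilon$, where $f_\epsilon=b_\lambda$ and hence $Lf_\epsilon=Lb_\lambda=-(\lambda^2+\rho^2)b_\lambda$; using property (1) this gives $\tau_t f_\epsilon(s)=v(s)$ and $\tau_t(Lf_\epsilon)(s)=-(\lambda^2+\rho^2)v(s)$, so $L_sv(s)=-(\lambda^2+\rho^2)v(s)$ on $|s|<t-\epsilon$. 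Letting $\epsilon\to 0^+$ extends the identity to all of $(-t,t)$.

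Once these four properties of $v$ are in hand, the uniqueness of the Cauchy problem for $L_{\alpha,\beta}+\lambda^2+\rho^2$ at its regular singular point $s=0$ (with indicial roots $0$ and $-2\alpha$, so that the even smooth local solutions form a one-dimensional space spanned by $\phi_\lambda$) closes the argument. The main obstacle I foresee is the justification of the commutation $L_s\tau_t f_\epsilon=\tau_t(Lf_\epsilon)$ for smooth even $f_\epsilon$; this is folklore for the Jacobi setup but must be invoked cleanly to make the approximation argument rigorous. Everything else is bookkeeping that exploits properties (1)--(3) of $\tau$ and the explicit form of $b_\lambda$ in terms of $\Phi_\lambda$.
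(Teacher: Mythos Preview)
Your proposal is correct and follows essentially the same approach as the paper: both arguments use the symmetry $\tau_s b_\lambda(t)=\tau_t b_\lambda(s)$ to reduce to one case, observe that the translated function (in the ``inner'' variable) is a smooth eigenfunction of $L_{\alpha,\beta}$ regular at $0$, and then invoke the uniqueness of $\phi_\lambda$ as the even normalized solution to identify the constant. The paper fixes $s$ and works in the $t$-variable (proving the case $s>t\geq 0$ first) and simply asserts the commutation $L\,\tau_s = \tau_s\,L$, whereas you fix $t$, work in the $s$-variable, and supply the extra detail of the $f_\epsilon$ approximation to justify that commutation near the singularity---a cosmetic difference, not a substantive one.
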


\begin{proof}
First we note that if $t\neq s$, $\cosh^{-1}\left(|\cosh s\cosh t + re^{i\psi}\sinh s\sinh t|\right)$ is non zero, whatever the value of $r\in[0,1]$ and $\psi\in[0,\pi]$ be. Therefore $\tau_sb_\lambda(t)$ is well-defined whenever $t\neq s$. Since $\tau_sb_\lambda(t)=\tau_tb_\lambda(s)$, it is enough to prove the second case. Fix $s>0$. Since $b_\lambda$ is an smooth eigenfunction of $L$ on $(0,\infty)$ with eigen value $-(\lambda^2+\rho^2)$, $\tau_sb_\lambda$ is an smooth eigenfunction of $L$ on $(0,s)$ with eigenvalue $-(\lambda^2+\rho^2)$ which is regular at $0$. Therefore 
$$
\tau_sb_\lambda(t)=C\phi_\lambda(t)\hspace
{3mm}\textup{for all}\hspace{1mm} 0\leq t<s,
$$
for some constant $C$. Putting $t=0$ in the above equation we get $C=b_\lambda(s)$. Hence the proof.
\end{proof}
\noindent Using Lemma \ref{lemma-functional equation of b-lambda} $T_\lambda f, 0<\Im \lambda<\rho$ can be written as,
 $$
T_\lambda f(t)=b_\lambda(t)\int_t^\infty f(s)\phi_\lambda(s)\Delta(s)ds-\phi_\lambda(t)
\int_t^\infty f(s)b_\lambda(s)\Delta(s)ds, t>0.
$$
Using this expression of $T_\lambda f$, we can prove the following lemma (see Lemma 4.4, Remark 4.5 \cite{PS}). 
\begin{lemma}\label{L-1 norm of T-lambda-f}
Let $0<\Im\lambda<\rho$ and $f\in L^1(\R, \Delta)_e$. Also assume that $\lambda\notin B_{\rho/2}(0)$. Then 
$T_\lambda f\in L^1(\R, \Delta)_e$ and its $L^1$ norm satisfies $||T_\lambda f||_1\leq C||f||_1(1+|\lambda|)^Ld(\lambda,\partial S_1)^{-1}$, for some non-negative integer $L$, where $d(\lambda,\partial S_1)$ denotes the Euclidean distance of $\lambda$
from the boundary $\partial S_1$ of the strip $ S_1$. 
\end{lemma}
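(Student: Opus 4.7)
The plan is to start from the explicit expression
\[
T_\lambda f(t) = b_\lambda(t) \int_t^\infty f(s) \phi_\lambda(s) \Delta(s)\, ds - \phi_\lambda(t) \int_t^\infty f(s) b_\lambda(s) \Delta(s)\, ds, \quad t > 0,
\]
furnished by Lemma~\ref{lemma-functional equation of b-lambda}, apply the triangle inequality, and then swap the orders of integration via Fubini. Writing $G(s) = \int_0^s |b_\lambda(t)| \Delta(t)\, dt$ and $H(s) = \int_0^s |\phi_\lambda(t)| \Delta(t)\, dt$, this will reduce matters to proving the two pointwise bounds
\[
|\phi_\lambda(s)|\, G(s) \leq \frac{C(1+|\lambda|)^L}{\rho - \Im\lambda}, \qquad |b_\lambda(s)|\, H(s) \leq C(1+|\lambda|)^L,
\]
uniformly in $s>0$, with $C$ and $L$ independent of $\lambda$ and $f$. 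Integrating each against $|f|\Delta$ will then yield $\|T_\lambda f\|_1 \leq C(1+|\lambda|)^L (\rho-\Im\lambda)^{-1} \|f\|_1$, which is the desired estimate since $d(\lambda, \partial S_1) = \rho - \Im\lambda$ for $0<\Im\lambda<\rho$.

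To verify the two pointwise bounds I would split each of $G$ and $H$ at $t = 1/2$. On $(0, 1/2]$ the local estimate $(1)$ on $b_\lambda$ combines with $\Delta(t) \asymp t^{2\alpha+1}$ to give a contribution of size $(1+|\lambda|)^N s^2$ for $G$ (with the logarithmic case $\alpha=0$ absorbed harmlessly), while $\phi_\lambda$ is smooth at $0$ with polynomial-in-$\lambda$ bounds on its sup-norm over any compact set; both target suprema on the local piece are therefore bounded by $C(1+|\lambda|)^L$. On $[1/2,\infty)$, property~$(2)$ gives $|b_\lambda(t)| \leq C(1+|\lambda|)^M e^{-(\Im\lambda+\rho)t}$, while the decomposition $\phi_\lambda = c(\lambda)\Phi_\lambda + c(-\lambda)\Phi_{-\lambda}$ together with \eqref{est-Phi} produces $|\phi_\lambda(t)| \leq C(1+|\lambda|)^{K'} e^{(\Im\lambda - \rho)t}$, since $\Im\lambda > 0$ makes $c(-\lambda)\Phi_{-\lambda}$ the dominant term at infinity. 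Using $\Delta(t) \asymp e^{2\rho t}$ at infinity, a direct computation yields
\[
G(s) \leq C(1+|\lambda|)^M \frac{e^{(\rho-\Im\lambda)s}}{\rho-\Im\lambda}, \qquad H(s) \leq C(1+|\lambda|)^{K'} \frac{e^{(\rho+\Im\lambda)s}}{\rho+\Im\lambda},
\]
and multiplying by $|\phi_\lambda(s)|$ and $|b_\lambda(s)|$ respectively cancels the exponentials: the first product inherits the crucial factor $(\rho-\Im\lambda)^{-1}$, while the second leaves only the harmless constant $(\rho+\Im\lambda)^{-1} \leq \rho^{-1}$.

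The hypothesis $\lambda \notin B_{\rho/2}(0)$ keeps $\lambda$ away from the pole of $c(-\lambda)$ at the origin, so the above bounds on $b_\lambda$ and $\phi_\lambda$ hold as uniform polynomial expressions in $|\lambda|$. The main obstacle will be purely bookkeeping: one has to track the various polynomial powers of $(1+|\lambda|)$ through every intermediate estimate and verify that exactly one factor of $(\rho-\Im\lambda)^{-1}$ appears in the final bound, arising solely from the tail of $G$, and that no analogous singular factor is introduced by the companion integral $H$. As the authors remark, the present situation is structurally identical to \cite[Lemma~4.4, Remark~4.5]{PS}, with the Jacobi density $\Delta_{\alpha,\beta}$ playing the role of the rank-one Haar density, so the estimates transpose verbatim.
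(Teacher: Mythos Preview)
Your proposal is correct and is precisely the argument the paper has in mind: the authors do not write out a proof here but point to the integral expression for $T_\lambda f$ and to \cite[Lemma~4.4, Remark~4.5]{PS}, where exactly this Fubini reduction to the two pointwise bounds $|\phi_\lambda(s)|\,G(s)$ and $|b_\lambda(s)|\,H(s)$ is carried out. The only place to be slightly more careful than your sketch indicates is the uniform-in-$\lambda$ estimate $|\phi_\lambda(s)|\,e^{(\rho-\Im\lambda)s}\le C(1+|\lambda|)^{K'}$ on $[1/2,\infty)$, since the decomposition $\phi_\lambda=c(\lambda)\Phi_\lambda+c(-\lambda)\Phi_{-\lambda}$ becomes delicate near $\lambda\in i\Z$; but this is handled in the cited reference via the Harish-Chandra series (using that $\phi_\lambda$ is entire in $\lambda$), and your remark that the hypothesis $\lambda\notin B_{\rho/2}(0)$ controls $c(-\lambda)$ is the right observation.
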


\section{Resolvent transform}
 Let $\delta$ be the Dirac delta distribution at $0$. Let $L^1_\delta(\mathbb R, \Delta)_e$ be the unital Banach algebra generated by $L^1(\mathbb R, \Delta)_e$ and $\{\delta\}$. Its maximal ideal space is one point compactification $ S_1\cup\{\infty\}$ of $ S_1$, i.e., more precisely, the maximal ideal space is $\big\{L_z:z\in S_1\cup\{\infty\}\big\}$, where $L_z$ is the complex homomorphism on $L^1_\delta(\mathbb R, \Delta)_e$ defined by $L_z(f)=\widehat{f}(z)$. For a (closed)  ideal $J$ of $L^1_\delta(\mathbb R, \Delta)_e$,  the hull $Z(J)$ is defined to be the set of common zeros (in $S_1\cup \{\infty\}$) of the Jacobi-Fourier transforms of elements in $J$. For the rest of the section $I$ always stands for a (closed) ideal of $L^1(\mathbb R, \Delta)_e$ (and hence an ideal in $L^1_\delta(\mathbb R, \Delta)_e$ too) such that the hull $Z=Z(I)$ is $\{\infty\}$ or $\{\infty,\pm i\rho\}$. Since $Z$ is the set of common zeros of Jacobi-Fourier transforms of the elements in $I$, it follows that the maximal ideal space of the quotient algebra $L^1_\delta(\mathbb R, \Delta)_e/I$ is $Z$ i.e. it consists of the complex homomorphisms $\widetilde L_z : z\in Z$, where $\widetilde{L}_z(f+I)=\hat{f}(z)$. So, by the Banach algebra theory, an element $f+I$ is invertible in $L^1_\delta(\mathbb R, \Delta)_e/I$ iff $\widehat{f}(z)\neq 0$ for all $z\in Z$.

Let $\lambda_0$ be a fixed complex number with $\Im\lambda_0>\rho$, so that  $b_{\lambda_0}$ is in $L^1$. Therefore, for $\lambda\in \mathbb{C}\setminus Z$, the function $\widehat{\delta}-(\lambda^2-
\lambda_0^2)\widehat{b}_{\lambda_0}$ does not vanish at any points of $Z$, and hence $\delta-(\lambda^2-\lambda_0^2)b_{\lambda_0}+I$ is inverible in the quotient algebra $L^1_\delta(\mathbb R, \Delta)_e/I$. We put
\begin{eqnarray}\label{definition of B-lambda}
B_\lambda=\left(\delta-(\lambda^2-
\lambda_0^2)b_{\lambda_0}+I\right)^{-1}*\left
(b_{\lambda_0}+I\right),\hspace{3mm}\lambda\in\mathbb{C}\setminus Z
\end{eqnarray}
which is, in fact, an element of $L^1(\R, \Delta)_e/I$.
Now, let $g\in L^\infty(\R, \Delta)_e$ annihilates $I$, so that we may consider $g$ as a bounded linear functional on $L^1(\R, \Delta)_e/I$. We define the resolvent tansform $\mathcal{R}[g]$ of $g$ by
\begin{eqnarray}\label{defn-R-g}
\mathcal{R}[g](\lambda)=\left\langle B_\lambda,g\right \rangle
\end{eqnarray}
From (\ref{definition of B-lambda}) it is easy to see that $\lambda\mapsto B_\lambda$ is a Banach space valued even holomorphic function on $\mathbb{C}\setminus Z$. It follows that $\mathcal{R}[g]$ is an even holomorphic function on $\mathbb{C}\setminus Z$. 

The resolvent transform $\mathcal{R}[g]$ has the following properties.
The proof of the this lemma is same as that of Lemma 5.1 in \cite{PS}. But we present the proof here since the lemma is the core of the proof  of the Wiener Tauberian theorem.
\begin{lemma} \label{properties of resolvent transform}
Assume $g\in L^\infty(\mathbb R, \Delta)_e$ annihilates $I$, and fix a function $f\in I$. Let $Z(\widehat{f}):=\{z\in S_1:\widehat{f}(z)=0\}$.

\noindent (a) $\mathcal{R}[g](\lambda)$ is an even holomorphic function on $\mathbb{C}\setminus Z$. It is given by the following formula :
\begin{eqnarray*}
\mathcal{R}[g](\lambda)=
\begin{cases} 
\langle b_\lambda,g\rangle,\hspace{3mm}\Im \lambda>\rho,\\
\frac{\langle T_\lambda f,g\rangle}{\widehat{f}(\lambda)}, \hspace{3mm}0<\Im\lambda<\rho, \lambda\notin Z(\widehat{f}).
\end{cases}
\end{eqnarray*}
\noindent (b) For $|\Im\lambda|>\rho$, $\left|\mathcal{R}[g](\lambda)\right|\leq C||g||_\infty\frac{(1+|\lambda|)^K}{d(\lambda,\partial S_1)},$

\noindent (c) For $|\Im\lambda|<\rho$, $\left|\widehat{f}(\lambda)\mathcal{R}[g](\lambda)\right|\leq C||f||_1||g||_\infty\frac{(1+|\lambda|)^L}{d(\lambda,\partial S_1)}$, where the constant $C$ is independent of $f\in I$. 
\end{lemma}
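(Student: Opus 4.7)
The heart of the argument is part~(a): once the two explicit formulas for $\mathcal{R}[g](\lambda)$ are in hand, parts~(b) and~(c) drop out by pairing against $g$ and invoking the $L^1$-norm bounds from property~(4) of $b_\lambda$ and Lemma~\ref{L-1 norm of T-lambda-f}. Evenness and holomorphicity of $\mathcal{R}[g]$ on $\C\setminus Z$ were already noted after (\ref{defn-R-g}). For the upper formula, when $\Im\lambda>\rho$ both $b_\lambda$ and $b_{\lambda_0}$ lie in $L^1(\R,\Delta)_e$ by property~(4), so Lemma~\ref{lemma-spherical transform of b-lambda} combined with the partial-fraction decomposition
\[
\widehat{b_{\lambda_0}*b_\lambda}(\xi)=\frac{1}{(\xi^2-\lambda_0^2)(\xi^2-\lambda^2)}=\frac{\widehat{b}_\lambda(\xi)-\widehat{b}_{\lambda_0}(\xi)}{\lambda^2-\lambda_0^2}
\]
and injectivity of the Fourier--Jacobi transform on $L^1$ yield $(\delta-(\lambda^2-\lambda_0^2)b_{\lambda_0})*b_\lambda=b_{\lambda_0}$; in the quotient this means $B_\lambda=b_\lambda+I$, and hence $\mathcal{R}[g](\lambda)=\langle b_\lambda,g\rangle$.

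For the lower formula the key ingredient is the identity $\widehat{T_\lambda f}(\xi)=(\widehat{f}(\lambda)-\widehat{f}(\xi))/(\xi^2-\lambda^2)$ recorded after (\ref{definition of T-lambda-f}). I would compute
\[
\Bigl(1-\frac{\lambda^2-\lambda_0^2}{\xi^2-\lambda_0^2}\Bigr)\cdot\frac{\widehat{f}(\lambda)-\widehat{f}(\xi)}{\xi^2-\lambda^2}=\frac{\widehat{f}(\lambda)-\widehat{f}(\xi)}{\xi^2-\lambda_0^2}=\widehat{f}(\lambda)\,\widehat{b}_{\lambda_0}(\xi)-\widehat{f*b_{\lambda_0}}(\xi),
\]
which, by Fourier--Jacobi injectivity, translates into the $L^1$-identity $(\delta-(\lambda^2-\lambda_0^2)b_{\lambda_0})*T_\lambda f=\widehat{f}(\lambda)\,b_{\lambda_0}-f*b_{\lambda_0}$. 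Since $f\in I$ the term $f*b_{\lambda_0}$ lies in $I$, so passing to the quotient gives $(\delta-(\lambda^2-\lambda_0^2)b_{\lambda_0}+I)*(T_\lambda f+I)=\widehat{f}(\lambda)(b_{\lambda_0}+I)$. Inverting the first factor (legal because $\widehat{f}(\lambda)\neq 0$) produces $T_\lambda f+I=\widehat{f}(\lambda)B_\lambda$, and pairing against $g$ (which annihilates $I$) gives the second formula.

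For (b), on $\Im\lambda>\rho$ I would estimate $|\mathcal{R}[g](\lambda)|\leq\|g\|_\infty\|b_\lambda\|_1$ and apply property~(4); since $d(\lambda,\partial S_1)=\Im\lambda-\rho$ there, the bound is immediate, and evenness handles $\Im\lambda<-\rho$. For (c), on $0<\Im\lambda<\rho$ with $\lambda\notin Z(\widehat{f})\cup B_{\rho/2}(0)$ I would use the lower formula to write $\widehat{f}(\lambda)\mathcal{R}[g](\lambda)=\langle T_\lambda f,g\rangle$, estimate by $\|g\|_\infty\|T_\lambda f\|_1$, and apply Lemma~\ref{L-1 norm of T-lambda-f}. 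Zeros of $\widehat{f}$ in $S_1\setminus Z$ are removable singularities of $\mathcal{R}[g]$, so $\widehat{f}\cdot\mathcal{R}[g]$ is holomorphic on $\{|\Im\lambda|<\rho\}\setminus Z$; on the compact set $\overline{B_{\rho/2}(0)}\setminus Z$ both $d(\lambda,\partial S_1)^{-1}$ and $(1+|\lambda|)^L$ are bounded, and the holomorphic product is bounded by maximum modulus on the boundary (where the previous estimate already holds), so the inequality persists after enlarging $C$. Evenness extends to $\Im\lambda<0$.

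The main obstacle will be the algebraic bookkeeping in the second paragraph: one must verify the partial-fraction step as an honest equality of $L^1$ functions (which requires knowing $T_\lambda f\in L^1$, guaranteed away from $B_{\rho/2}(0)$ by Lemma~\ref{L-1 norm of T-lambda-f} and recovered inside $B_{\rho/2}(0)$ by analytic continuation of both sides as elements of the quotient), and then confirm that the resulting identity descends cleanly to $L^1_\delta(\R,\Delta)_e/I$. Once that is secured, the remaining size estimates are routine.
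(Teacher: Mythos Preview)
Your proposal is correct and follows essentially the same route as the paper's proof: the algebraic identity you derive for $(\delta-(\lambda^2-\lambda_0^2)b_{\lambda_0})*T_\lambda f$ is exactly the paper's computation (they divide through by $\widehat f(\lambda)$ one line earlier, but the content is identical), and part~(b) is handled the same way via property~(4). The only noticeable difference is your treatment of the ball $B_{\rho/2}(0)$ in part~(c): you invoke the maximum modulus principle on $\overline{B_{\rho/2}(0)}$, whereas the paper simply observes from the definition~(\ref{defn-R-g}) that $\lambda\mapsto B_\lambda$ is a continuous $L^1(\R,\Delta)_e/I$-valued function on the compact set $\overline{B_{\rho/2}(0)}$, hence $\mathcal R[g]$ is bounded there by $C\|g\|_\infty$ with $C$ independent of $f$, and then $|\widehat f(\lambda)|\le\|f\|_1$ finishes it. Their argument is a bit more direct and avoids the detour through holomorphicity, but both are valid. (One small slip: invertibility of $\delta-(\lambda^2-\lambda_0^2)b_{\lambda_0}+I$ in the quotient comes from $\lambda\notin Z$, not from $\widehat f(\lambda)\neq 0$; the latter is what lets you divide afterwards.)
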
 
\begin{proof}
(a) Let $\Im\lambda>\rho$. Then $b_\lambda$ is in $L^1$ and $\widehat{b}_\lambda(z)=\frac{1}{z^2-\lambda^2}$, $z\in S_1$. We observe that for $z\in S_1$,
$$
\frac{1}{\widehat{b}_{\lambda_0}(z)}-\frac{1}{\widehat{b}_\lambda(z)}=\lambda^2-\lambda_0^2
$$
which is equivalent to saying that
$$
\left(1-(\lambda^2-\lambda_0^2)
\widehat{b}_{\lambda_0}(z)\right)\widehat{b}_\lambda(z)=\widehat{b}_{\lambda_0}(z),\hspace{3mm}z\in S_1.
$$
Apply the inverse spherical transform and mod out $I$ to get
$$
\left(\delta-(\lambda^2-\lambda_0^2)
b_{\lambda_0}+I\right)*(b_\lambda+I) =b_{\lambda_0}+I,
$$
Since $\left(\delta-(\lambda^2-\lambda_0^2)
b_{\lambda_0}+I\right)$ is invertible in $L^1_\delta(\mathbb R)_e/I$, comparing the above equation with \ref{definition of B-lambda} we get $B_\lambda=b_\lambda+I$. Therefore, by the definition of $\mathcal{R}[g](\lambda)$, $\mathcal{R}[g](\lambda)=\langle b_\lambda,g\rangle$.

Next we assume that $0<\Im\lambda<\rho$, $\lambda\notin Z(\widehat{f})$. So, $T_\lambda f$ is in $L^1$ and $\widehat{T_\lambda f}(z)=\frac{\widehat{f}(\lambda)-\widehat{f}(z)}{z^2-\lambda^2}, z\in S_1$. A small calculation shows that
$$
\left(1-(\lambda^2-\lambda_0^2)
\widehat{b}_{\lambda_0}(z)\right)\frac{\widehat{T_\lambda f}(z)}{\widehat{f}(\lambda)}=\widehat{b}_{\lambda_0}(z)-\frac{\widehat{f}(z)\widehat{b}_{\lambda_0}(z)}{\widehat{f}(\lambda)},\hspace{3mm}z\in S_1.
$$
Again, apply inverse spherical transform and mod out $I$ to get
$$
\left(\delta-(\lambda^2-\lambda_0^2)
b_{\lambda_0}+I\right)*\left(\frac{T_\lambda f}{\widehat{f}(\lambda)}+I\right) =b_{\lambda_0}+I.
$$
Therefore $B_\lambda=\frac{T_\lambda f}{\widehat{f}(\lambda)}+I$ which gives the desired formula for $\mathcal{R}[g](\lambda)$ in this case.
 
(b) It follows from the estimate of $\|b_\lambda\|_1$ and the fact that $\mathcal{R}[g](\lambda)$ is even.

(c) From Lemma \ref{L-1 norm of T-lambda-f} it follows that $$\left|\widehat{f}(\lambda)\mathcal{R}[g](\lambda)\right|\leq C||f||_1||g||_\infty\frac{(1+|\lambda|)^L}{d(\lambda,\partial S_1)}$$ for $0<\Im\lambda<\rho/2,\lambda\not\in B_{\rho/2}(0)$, where $C$ is independent of $f\in I$. Since $\widehat{f}(\lambda)\mathcal{R}[g](\lambda)$ is an even continuous function on $ S_1$, the same estimate is true for $|\Im\lambda|<\rho, \lambda\not\in B_{\rho/2}(0)$. From (\ref{defn-R-g}) it follows that  $\mathcal{R}[g](\lambda)$ is bounded on $ B_{\rho/2}(0)$, with bound independent of $f$. Therefore  on $ B_{\rho/2}(0)$$$\left|\widehat{f}(\lambda)\mathcal{R}[g](\lambda)\right|\leq C||f||_1$$ where $C$ is independent of $f$. Hence the proof follows.
\end{proof}

\section{some results from complex analysis}
In this section we state some results from complex analysis. The proof of them involves the log-log theorem, the Paley-Wiener theorem, Alhfors distortion theorem, and the Phragman-Lindel\''{o}f principle (\cite{Hedenmalm-1985}, \cite{Dah}).

For any function $F$ on $\R$, we let $$\delta_\infty^+(F)=-\limsup_{t\rightarrow\infty} e^{-\frac{\pi}{2\rho}t}\log|F(t)|, \,\,\,\,\,\,\delta_\infty^-(F)=-\limsup_{t\rightarrow\infty} e^{-\frac{\pi}{2\rho}t}\log|F(-t)|$$ and $$\delta_{ i\rho}(F)=\limsup_{t\rightarrow \rho-} (\rho-t)\log|F(i t)|, \,\,\,\,\,\,\delta_{-i\rho}(F)=\limsup_{t\rightarrow (-\rho)+} (\rho + t)\log|F(i t)|.$$

Proof of the following theorem is similar to \cite[Theorem 6.3]{PS}.
\begin{theorem}\label{loglog theorem}
Let $\Omega$ be a collection of bounded holomorphic functions $F$ on $S_1^0$ such that 
$$
\inf_{F\in \Omega}\delta^+_\infty(F)=\inf_{F\in \Omega}\delta^-_\infty(F)=0.
$$ Suppose $H$ is a holomorphic function on $\mathbb{C}\setminus \{\pm i\rho\}$ such that, for some non-negative integer $N$, it satisfies the following estimates :
\begin{eqnarray*}
|H(z)|&\leq &\frac{(1+|z|)^N}{d(z,\partial S_1)},
\hspace{3mm}z\in\mathbb{C}\setminus S_1,\\
|F(z)H(z)|&\leq & \frac{(1+|z|)^N}{d(z,\partial S_1)},
\hspace{3mm}z\in S_1^0,
\hspace{1mm}\textup{for all}\hspace{1mm}F\in \Omega.
\end{eqnarray*}
Then $H$ is dominated by a polynomial outside a bounded neighbourhood of $\{\pm i\rho\}$. 
\end{theorem}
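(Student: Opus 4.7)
The plan is to reduce the theorem to a Phragm\'en-Lindel\"of / log-log argument in each half-strip of $S_1^0$, following the blueprint of \cite[Theorem~6.3]{PS}. Outside the strip, the first hypothesis directly gives polynomial control well away from $\partial S_1$: for every $\eta>0$ and every $z$ with $|\Im z|\geq\rho+\eta$ one has $|H(z)|\leq C_\eta(1+|z|)^N$. The essential work therefore happens inside $S_1^0$, where only the product bound $|FH|\leq (1+|z|)^N/d(z,\partial S_1)$ is available and $F$ may vanish.

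I would first convert the assumption $\inf_{F\in\Omega}\delta_\infty^+(F)=0$ into a pointwise bound on $H$ along the positive real axis. Given $\varepsilon>0$, pick $F_\varepsilon\in\Omega$ with $\delta_\infty^+(F_\varepsilon)<\varepsilon$. Unwinding the definition produces a sequence $t_n\to+\infty$ along which
\[
|F_\varepsilon(t_n)|\geq e^{-\varepsilon\,e^{\pi t_n/(2\rho)}}.
\]
Since $d(t,\partial S_1)=\rho$ for real $t$, the product bound yields $|H(t_n)|\leq C(1+t_n)^N\exp\!\bigl(\varepsilon\,e^{\pi t_n/(2\rho)}\bigr)$, and a symmetric estimate along a sequence tending to $-\infty$ follows from $\inf_\Omega\delta_\infty^-(F)=0$. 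A subharmonic interpolation on $\log|H|$, using the product bound on $|F_\varepsilon H|$ over small discs lying in $S_1^0$, then extends these sequence bounds to a continuous doubly-exponential majorant on all of $\R$ (at the cost of harmlessly enlarging $\varepsilon$).

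Next I would run Phragm\'en-Lindel\"of in the upper half-strip $\Sigma_+=\{0<\Im z<\rho\}$ with a small disc $B_\delta(i\rho)$ removed. On the top edge $\Im z=\rho$ (minus the disc), a polynomial bound on $H$ is inherited from the exterior estimate by a Schwarz-reflection / Cauchy-averaging argument using holomorphicity of $H$ across $\partial S_1$ away from $i\rho$. On the real axis the doubly-exponential majorant from the previous paragraph applies, and the rate $e^{\pi|t|/(2\rho)}$ is exactly critical for a half-strip of width $\rho$: the conformal map $\zeta=e^{\pi z/(2\rho)}$ takes $\Sigma_+$ to a quadrant and converts this into a pure exponential bound $e^{\varepsilon|\zeta|}$ on one boundary ray. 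Ahlfors' distortion theorem controls the image region quantitatively, and a log-log refinement of Phragm\'en-Lindel\"of (valid precisely at this critical rate) then forces $H$ to be polynomially bounded on $\Sigma_+\setminus B_\delta(i\rho)$, with the polynomial degree independent of $\varepsilon$ since $\varepsilon>0$ is arbitrary in the choice of $F_\varepsilon$. The symmetric argument in the lower half-strip handles $\Sigma_-\setminus B_\delta(-i\rho)$, and combining with the exterior bound the claim follows.

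The principal obstacle is this final Phragm\'en-Lindel\"of step at the critical growth rate $e^{\pi|t|/(2\rho)}$: ordinary versions fail at exactly this rate, so one must invoke the log-log theorem together with Ahlfors distortion, and extract the polynomial bound by exploiting the arbitrariness of $\varepsilon$ through the family $\{F_\varepsilon\}\subset\Omega$. A secondary technical point is the Schwarz-reflection inheritance of a polynomial bound across $\partial S_1\setminus\{\pm i\rho\}$, which requires careful use of the holomorphicity of $H$ there together with the two-sided bounds on $H$ and $FH$.
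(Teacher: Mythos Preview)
Your proposal is correct and follows the same route the paper indicates: the paper gives no proof of this theorem beyond the remark that it is ``similar to \cite[Theorem~6.3]{PS}'' and that the arguments in this section rest on the log-log theorem, Ahlfors distortion, and Phragm\'en--Lindel\"of, which is precisely the toolkit and overall structure you outline. In fact your sketch is more detailed than anything the paper supplies here.
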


The following theorem follows from the proof of  \cite[Theorem 6.13]{Dah}:
\begin{theorem}
Let $\Omega$ be a collection of bounded holomorphic functions $F$ on $ S_1^0$ such that $|F(z)|\rightarrow 0$ as $|z|\rightarrow \infty$ (in $S_1^\circ$) and 
$$
\inf_{F\in \Omega}\delta_{\pm i\rho}(F)=0.$$
Suppose $G$ is a holomorphic function on $\mathbb{C}\setminus Z$ ($Z$ is a finite subset of $\partial S_1$) such that for some positive integer $N$ it satisfies the following estimate :
\begin{eqnarray*}
|F(z)G(z)|&\leq & (d(z,\partial S_1))^{-N},\hspace{3mm}z\in S_1^0,
\hspace{1mm}\textup{for all}\hspace{1mm}F\in \Omega.
\end{eqnarray*} 
Then $G$ has  poles at $\pm i\rho$ of order atmost $N$.
\end{theorem}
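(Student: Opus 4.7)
The plan is local: by the symmetry between the two corners of the strip, it suffices to show that $G$ has a pole of order at most $N$ at $i\rho$. Choose $r>0$ small enough that $\overline{B(i\rho,r)} \cap Z = \{i\rho\}$, so that $G$ is holomorphic on the punctured disc. Introduce the auxiliary function $h(z) := (z-i\rho)^N G(z)$; by Riemann's removable singularity theorem it is enough to prove that $h$ is bounded on some punctured neighbourhood of $i\rho$.

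The first step is to convert the infimum condition $\inf_{F\in\Omega}\delta_{i\rho}(F) = 0$ into a quantitative boundary estimate. For each $\epsilon > 0$, pick $F_\epsilon \in \Omega$ with $\delta_{i\rho}(F_\epsilon) < \epsilon$; the definition of $\delta_{i\rho}$ then yields $|F_\epsilon(it)| \geq \exp(-\epsilon/(\rho-t))$ for $t$ sufficiently close to $\rho$ from below. Evaluating the standing hypothesis $|F_\epsilon(z)G(z)| \leq d(z,\partial S_1)^{-N}$ on the imaginary diameter (where $d(it,\partial S_1)=\rho-t$) produces
$$|h(it)| = (\rho-t)^N |G(it)| \leq \exp\!\big(\tfrac{\epsilon}{\rho-t}\big),$$
so that $\log|h|$ grows at most like $\epsilon/|z-i\rho|$ along the imaginary segment, with $\epsilon$ arbitrarily small.

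The core step is a Phragm\'en--Lindel\"of/Ahlfors distortion argument in the lower half-disc $R := B(i\rho,r) \cap S_1^0$. This $R$ is a Jordan domain with an incoming corner at $i\rho$, on whose outer arc $h$ is holomorphic and bounded by compactness. Applying Ahlfors distortion to the subharmonic function $\log|h|$ on $R$, with boundary data bounded on the outer arc and dominated by $\epsilon/|z-i\rho|$ on the imaginary diameter, I would obtain a pointwise interior bound of the form $|h(z)| \leq C_\epsilon \exp(K\epsilon/|z-i\rho|)$ throughout $R$, with a geometric constant $K$ independent of $\epsilon$. Since $\epsilon > 0$ is arbitrary, the iterated Phragm\'en--Lindel\"of / log-log mechanism of \cite[Theorem 6.13]{Dah} then forces $h$ to be bounded on $R$. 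To promote boundedness on $R$ to boundedness on a full punctured neighbourhood of $i\rho$, I would combine this interior estimate with the polynomial bound on $G$ outside the strip which is available in the framework where the theorem will be invoked (supplied by Theorem \ref{loglog theorem}); together they control $h$ on $B(i\rho,r)\setminus\{i\rho\}$, after which Riemann's theorem removes the singularity of $h$, so that $G$ has a pole of order at most $N$ at $i\rho$.

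The principal obstacle is the Ahlfors distortion step in the cornered Jordan domain $R$: to propagate the boundary bound $\log|h(z)| \le \epsilon/|z-i\rho|$ to an interior bound $K\epsilon/|z-i\rho|$ with $K$ independent of $\epsilon$, one needs a careful harmonic-measure/indicator-function computation at the inward-pointing corner. This is precisely the content of the proof of \cite[Theorem 6.13]{Dah}, and is why the present theorem is obtained by adapting that proof rather than by quoting a ready-made statement; once this propagation is established, the remaining ingredients — limiting in $\epsilon$, pasting in the exterior polynomial bound, and invoking Riemann — are straightforward.
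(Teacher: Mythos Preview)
Your overall strategy---localize near $i\rho$, multiply by $(z-i\rho)^N$, and invoke the Ahlfors-distortion/Phragm\'en--Lindel\"of machinery of \cite[Theorem~6.13]{Dah}---is exactly the route the paper takes: the paper gives no argument of its own here and simply records that the result ``follows from the proof of \cite[Theorem~6.13]{Dah}''. So at the level of approach you and the paper agree.

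There is, however, a genuine gap in your gluing step. You propose to pass from boundedness of $h=(z-i\rho)^NG$ on the half-disc $R=B(i\rho,r)\cap S_1^0$ to boundedness on the full punctured disc by invoking ``the polynomial bound on $G$ outside the strip \ldots\ supplied by Theorem~\ref{loglog theorem}''. This fails for two reasons. First, the present theorem has no hypothesis on $G$ outside $S_1^0$, so importing an exterior bound changes the statement you are proving. Second, even in the intended application, Theorem~\ref{loglog theorem} only yields polynomial control \emph{outside a bounded neighbourhood of} $\{\pm i\rho\}$; it says nothing about the upper half-disc $B(i\rho,r)\setminus S_1$, which is precisely the region you need to fill in. And control on the lower half-disc alone is genuinely insufficient to remove the singularity: the function $e^{i/(z-i\rho)}$ is holomorphic on $\mathbb{C}\setminus\{i\rho\}$, satisfies $|e^{i/(z-i\rho)}|\le 1$ everywhere in $S_1^0$, and yet has an essential singularity at $i\rho$. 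The argument in \cite{Dah} does not proceed by bounding $h$ separately inside and outside the strip and then patching; the distortion estimate is carried out (after a conformal change of variable) in a region that already straddles $\partial S_1$, so that the holomorphy of $G$ across the boundary line is built into the Phragm\'en--Lindel\"of step from the start rather than appended afterwards.
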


\begin{theorem}\label{pole}
Let $\Omega$ be a collection of bounded holomorphic functions $F$ on $ S_1^0$ such that $|F(z)|\rightarrow 0$ as $|z|\rightarrow \infty$ (in $S_1^\circ$) and 
$$\inf_{F\in \Omega}\delta_{\pm i\rho}(F)=0.$$
Suppose $H$ is a holomorphic function on $\mathbb{C}\setminus \{\pm i\rho\}$  satisfying the following estimate (for some positive integer $N$) :
\begin{eqnarray*}
|F(z)H(z)|&\leq &  \frac{(1+|z|)^{N}}{d(z,\partial S_1)},\hspace{3mm}z\in S_1^0,
\hspace{1mm}\textup{for all}\hspace{1mm}F\in \Omega.
\end{eqnarray*} 
Then $G$ has at most simple poles at $\pm i\rho$.
\end{theorem}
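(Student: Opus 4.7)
The plan is to reduce to the preceding theorem, which controls the order of a pole at $\pm i\rho$ by the exponent $N$ in a bound of the form $|FG|\leq d(z,\partial S_1)^{-N}$. If the polynomial factor $(1+|z|)^N$ on the right-hand side of our hypothesis could be removed, applying the preceding theorem with $N=1$ would immediately give that $H$ has at most simple poles at $\pm i\rho$. The entire task is therefore to absorb $(1+|z|)^N$ into the family $\Omega$ without destroying its qualitative properties.

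To do so I would fix the polynomial $p(z)=(z^2+(\rho+1)^2)^N$, whose zeros $\pm i(\rho+1)$ lie outside $\overline{S_1}$. Hence $|p(z)|$ is bounded below by a positive constant on $\overline{S_1}$ and $|p(z)|\asymp(1+|z|^2)^N$ there. Define $\widetilde F:=F/p$ for each $F\in\Omega$ and put $\widetilde\Omega:=\{\widetilde F:F\in\Omega\}$. Since $p$ is zero-free on $\overline{S_1}$, each $\widetilde F$ is holomorphic and bounded on $S_1^0$ with $|\widetilde F(z)|\to 0$ as $|z|\to\infty$. Moreover, because $p(\pm it)=((\rho+1)^2-t^2)^N$ has the nonzero finite limit $(2\rho+1)^N$ as $t\to\rho^-$, the quantity $(\rho-t)\log|p(\pm it)|$ tends to $0$, so $\delta_{\pm i\rho}(\widetilde F)=\delta_{\pm i\rho}(F)$; in particular $\inf_{\widetilde F\in\widetilde\Omega}\delta_{\pm i\rho}(\widetilde F)=0$.

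Finally, the hypothesis on $H$ translates into
\begin{equation*}
|\widetilde F(z)H(z)|=\frac{|F(z)H(z)|}{|p(z)|}\leq\frac{(1+|z|)^N}{d(z,\partial S_1)\,|p(z)|}\leq\frac{C}{d(z,\partial S_1)},\qquad z\in S_1^0,
\end{equation*}
for a constant $C$ independent of $F$. Rescaling each $\widetilde F$ by $1/C$ (which leaves every property above intact) puts us in the exact setting of the preceding theorem with exponent $N=1$, and that theorem then yields the conclusion: $H$ has poles of order at most one at $\pm i\rho$. The only slightly delicate point in the whole argument is checking that division by $p$ preserves the boundary asymptotics $\delta_{\pm i\rho}$, and this works cleanly because $p$ is holomorphic and non-vanishing in a neighborhood of $\pm i\rho$.
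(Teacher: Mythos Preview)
Your argument is correct, and it differs from the paper's route in an interesting way. The paper absorbs the polynomial growth factor into $H$ rather than into the family $\Omega$: it sets
\[
G(z)=\frac{H(z)}{(z-i\rho)^{N/2}(z+i\rho)^{N/2}}
\]
(after making $N$ even), uses the elementary inequality $(1+|z|)^{2}\,d(z,\partial S_1)\le C\,|z^2+\rho^2|$ on $S_1^0$ to obtain $|F(z)G(z)|\le C\,d(z,\partial S_1)^{-(N/2+1)}$, and then applies the preceding theorem with exponent $N/2+1$. That gives a pole of order at most $N/2+1$ for $G$, hence at most $1$ for $H$ after multiplying back by $(z^2+\rho^2)^{N/2}$.

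Your approach instead pushes the polynomial onto the test family: dividing each $F$ by $p(z)=(z^2+(\rho+1)^2)^N$, which is zero-free on $\overline{S_1}$, kills the $(1+|z|)^N$ factor while preserving boundedness, vanishing at infinity, and (since $p(i\rho)\neq 0$) the boundary quantities $\delta_{\pm i\rho}$. This lands you directly in the preceding theorem with exponent $1$, so no bookkeeping with shifted pole orders is needed. Both reductions are short; yours is arguably the cleaner of the two because it avoids introducing artificial poles into $H$ and then cancelling them.
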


\begin{proof}
We can assume that $N$ is even. We define the holomorphic function $G$ on $\mathbb{C}\setminus\{\pm i\rho\}$ by
$$G(z)=\frac{H(z)}{(z-i\rho)^{N/2}(z+i\rho)^{N/2}}.$$
Then clearly $G(z)$ satisfies
\begin{eqnarray*}
|F(z)G(z)|&\leq & \frac{1}{(d(z,\partial S_1))^{N/2+1}},
\hspace{3mm}z\in S_1^0,
\hspace{1mm}\textup{for all}\hspace{1mm}F\in \Omega.
\end{eqnarray*}
Hence the theorem follows by the previous theorem. 
\end{proof}

\section{Proof of the main theorem}
\begin{proof}[proof of Theorem \ref{main-theorem}:]
Proof of \textbf{(1)} is similar to ``{\em proof of Theorem 1.2}'' in Section $7$ (in \cite{PS}).

\textbf{(2)} We can assume that the elements in $I$ are of unit norm. Let $g\in L^\infty(\R, \Delta)_e$ annihilates the (closed) ideal $I$ generated by $\{f_\nu\mid \nu\in\Lambda\}$. We must show that $g$ annihilates $L_0^1(\R, \Delta)_e$. By Lemma \ref{properties of resolvent transform}, $\mathcal{R}[g]$ satisfies the following estimates
\begin{eqnarray*}
|\mathcal{R}[g](z)|&\leq & C(1+|z|)^N\left(d(z,\partial S_1)\right)^{-1},
\hspace{3mm}z\in\mathbb{C}\setminus S_1,\\
|\widehat{f_\nu}(z)\mathcal{R}[g](z)|&\leq & C(1+|z|)^N\left(d(z,\partial S_1)\right)^{-1},
\hspace{3mm}z\in S_1^0,
\end{eqnarray*}
for all $\nu\in\Lambda$, for some constant $C$. 
Therefore, by Theorem \ref{pole}, it has at most simple poles at $\{\pm i\rho\}$. So we  write 
$$
\mathcal{R}[g](z)=\frac{a}{z^2+\rho^2}+h(z),\,\,\,
z\in\mathbb{C}\setminus\{\pm i\rho\}
$$ 
for some constant $a$ and even entire function $h$. Also, by Theorem \ref{loglog theorem} $\mathcal{R}[g]$ has at most polynomial growth at $\infty$, and by $(4)$ (in section $3$),
$\mathcal{R}[g](z)\rightarrow 0$ as $|z|\rightarrow\infty$ along the imaginary axis. Therefore the same properties are satisfied by the function $h$ too, so that by Liouville's theorem $h=0$, and hence 
$$
\mathcal{R}[g](z)=\frac{a}{z^2+\rho^2},\,\,\, z\in\mathbb{C}\setminus\{\pm i\rho\}.
$$
Let $m\in L^\infty(\R, \Delta)_e$ corresponds to the complex homomorphism $f\mapsto\hat{f}(i\rho)$ on $L^1(\R, \Delta)_e$ i.e. $\hat{f}(i\rho)=\langle f,m\rangle$ for all $f\in L^1(\R, \Delta)_e$. Then for $z$ with $\Im z>0$,
$$
\mathcal{R}[g](z)=-a \hat{b}_{z}(i\rho)=-a\langle{b_z, m\rangle}.
$$ 
Since $\{b_z:\Im z>0\}$ is dense in $L^1(\R, \Delta)_e$, $g=-\bar am$. Since $m$ annihilates $L^1_0(\R, \Delta)_e$, so does $g$.
\end{proof}

\section{Furstenberg Theorem}
Let $G$ be a noncompact connected semisimple Lie group with finite center and $K$ be a maximal compact subgroup of $G$. Let $\mu$ be a $K$-invariant complex measure on $G/K$ such that $\mu(G/K)=1$. A bounded function $f$ on $G/K$ is said to be $\mu$-harmonic if $f*\mu=f$ i.e.
$$
\int_Gf(gh)d\mu(h)=f(g),\,\,\,\,\textup{for all}\,\,g\in G.
$$
If $f$ is harmonic (i.e. $\int_K f(gkh)dk=f(g)$ for all $g, h\in G$, or, equivalenttly, $f$ is annihilated by the Laplace-Beltrami operator), it is easy to see that it is $\mu$-harmonic. Naturally the following question arises :

\textbf{(A)} Under what conditions on $\mu$, $\mu$-harmonic functions are  hamonic functions only?
 
In \cite[Theorem 5, p. 370]{Furs} Furstenberg answers the question above  in positive, when $\mu$ is absolutely continuous $K$-invariant probability measure on $G/K$. In \cite{Ben-2} using Winner-Tauberian Theorem, the authors proved the following result for the disc algebra $\mathbb D=\mathrm{SL_2}(\mathbb{R})/\mathrm{SO}(2)$.  Let $\Sigma$ denote the usual maximal ideal space $\{z\in\mathbb C:0\leq\Re z\leq 1\}$.

\begin{theorem}\label{theorem-furstenberg-D}
Let $\mu$ be a $SO(2)$-invariant complex measure on $\mathbb D$ such that $\mu(\mathbb D)=1$, $\mu(\{0\})\neq 1$, $\widehat{\mu}(\lambda)\neq 1$ for all $\lambda\in\Sigma\setminus\{0,1\}$, and 
$$
\limsup_{x\rightarrow 0^+} x\log|1-\widehat{\mu}(x)|=0.
$$
Then every $\mu$-harmonic functions are essentially the harmonic ones.
\end{theorem}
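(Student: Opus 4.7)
The plan is to apply Theorem~\ref{main-theorem}(2) to a closed ideal naturally cut out by $\mu-\delta$. Let $f\in L^\infty(\mathbb{D})$ be $\mu$-harmonic; this means $f\ast(\mu-\delta)=0$. Since $\mu$ is $\mathrm{SO}(2)$-invariant on $\mathbb{D}$, it corresponds to a biinvariant measure on $\mathrm{SL}_2(\mathbb{R})$, hence to an even complex Borel measure on $\mathbb{R}$ in the Jacobi picture with $\alpha=\beta=0$, $\rho=1$. For any $\phi\in L^1(\mathbb{R},\Delta)_e$ the element $\phi\ast(\mu-\delta)$ lies in $L^1$ and its transform is $\widehat{\phi}(\lambda)(\widehat{\mu}(\lambda)-1)$, which vanishes at $\pm i\rho$ because $\phi_{\pm i\rho}\equiv 1$ forces $\widehat{\mu}(\pm i\rho)=\mu(\mathbb{D})=1$. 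So the closed ideal
\[
I:=\overline{\mathrm{span}}\{\phi\ast(\mu-\delta):\phi\in L^1(\mathbb{R},\Delta)_e\}
\]
lies inside $L^1_0(\mathbb{R},\Delta)_e$. By associativity $f\ast\bigl(\phi\ast(\mu-\delta)\bigr)=(f\ast\phi)\ast(\mu-\delta)=0$ for every generator, so $f$ is annihilated by convolution against every element of $I$.

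Next I would verify the three hypotheses of Theorem~\ref{main-theorem}(2) for this generating family. The assumption $\widehat{\mu}(\lambda)\neq 1$ on $\Sigma\setminus\{0,1\}$, transported to the Jacobi parametrization, says that $\widehat{\mu}-1$ vanishes in $S_1$ only at $\pm i\rho$; since for each $\lambda\in S_1\setminus\{\pm i\rho\}$ some $\widehat{\phi}(\lambda)$ is nonzero, the common zero set of $I$ in $S_1$ is exactly $\{\pm i\rho\}$. For $\inf\delta_\infty^+=0$: the fact that $\widehat{\mu}(\lambda)\to\mu(\{0\})\neq 1$ as $|\lambda|\to\infty$ along $\mathbb{R}$ keeps $\widehat{\mu}-1$ bounded away from zero at infinity, so choosing $\phi$ with a polynomially decaying transform (e.g.\ a Jacobi--Schwartz bump) yields $\delta_\infty^+\bigl(\widehat{\phi}(\widehat{\mu}-1)\bigr)=0$. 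For $\inf\delta_{\pm i\rho}=0$: the hypothesis $\limsup_{x\to 0^+}x\log|1-\widehat{\mu}(x)|=0$ translates, under the change of variables between $\Sigma$ and $S_1$, into $\delta_{\pm i\rho}(\widehat{\mu}-1)=0$, and choosing $\phi$ with $\widehat{\phi}(\pm i\rho)\neq 0$ preserves this property in the product. Theorem~\ref{main-theorem}(2) then gives $I=L^1_0(\mathbb{R},\Delta)_e$, so $f\ast\phi=0$ for every $\phi\in L^1_0(\mathbb{R},\Delta)_e$.

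To close the argument, I would observe that the map $\phi\mapsto f\ast\phi$ from $L^1(\mathbb{R},\Delta)_e$ to $L^\infty(\mathbb{D})$ vanishes on the codimension-one subspace $L^1_0$, so it factors as $f\ast\phi=\widehat{\phi}(i\rho)\,\xi$ for some fixed $\xi\in L^\infty$. Testing against a biinvariant approximate identity $e_n$, one has $\widehat{e_n}(i\rho)\to 1$ and $f\ast e_n\to f$ (regularity of $f$ being inherited from $\mu$ via the identity $f=f\ast\mu$, using $\mu(\{0\})\neq 1$), so $\xi=f$. Hence
\[
f\ast\phi=\widehat{\phi}(i\rho)\,f\qquad\text{for every }\phi\in L^1(\mathbb{R},\Delta)_e.
\]
Applied to the measure $\sigma_r\ast e_n$, where $\sigma_r$ is the spherical probability measure of radius $r$, the identity gives $f\ast(\sigma_r\ast e_n)=\widehat{e_n}(i\rho)\phi_{i\rho}(r)\,f\to f$, while the left side converges to $f\ast\sigma_r$. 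Thus $f\ast\sigma_r=f$ for every $r>0$: this is the spherical mean value property on $\mathbb{D}$, which by the classical Godement characterization is equivalent to $f$ being a bounded harmonic function (a Poisson integral).

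The main obstacle I anticipate is in the second paragraph: matching the hypotheses on $\widehat{\mu}$, which are phrased in the disc coordinate $\Sigma$, to the Jacobi strip $S_1$, and then choosing auxiliary test functions $\phi$ that simultaneously realize the three infima $\delta_\infty^+$, $\delta_{i\rho}$, $\delta_{-i\rho}$ as $0$. In particular, the $\delta_\infty^+$ condition uses crucially the hypothesis $\mu(\{0\})\neq 1$, and the $\delta_{\pm i\rho}$ condition uses the precise quantitative decay assumption on $1-\widehat{\mu}$ near the edge of $\Sigma$. Once Theorem~\ref{main-theorem}(2) is triggered, the remaining passage from ``$f$ annihilates $L^1_0$'' to ``$f$ is harmonic'' is a standard approximate-identity dequantization.
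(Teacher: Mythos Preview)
The paper does not prove Theorem~\ref{theorem-furstenberg-D}; it is quoted from \cite{Ben-2} as background. What the paper does prove is the hypergeometric analogue, Theorem~\ref{theorem-furstenberg}, and your proposal follows that proof essentially line for line: form the ideal $I$ generated by $(\mu-\delta)\ast L^1$, check that the common zero set is exactly $\{\pm i\rho\}$, use $\mu(\{0\})\neq 1$ to get $\inf\delta_\infty^+=0$ and the quantitative hypothesis on $1-\widehat\mu$ near the boundary to get $\inf\delta_{i\rho}=0$, apply Theorem~\ref{main-theorem}(2) to conclude $I=L^1_0$, and deduce that $f$ annihilates $L^1_0$. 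So your strategy is exactly the one the paper (and \cite{Ben-2}) uses.

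Two minor remarks. First, the Jacobi parameters for $\mathrm{SL}_2(\mathbb{R})/\mathrm{SO}(2)$ are $(\alpha,\beta)=(0,-\tfrac12)$, not $(0,0)$; this does not affect the argument but the value of $\rho$ and the dictionary between $\Sigma$ and $S_1$ depend on it. Second, your final step differs slightly in scope from the paper: the paper's Theorem~\ref{theorem-furstenberg} is already phrased for \emph{even} bounded $f$ and concludes ``$f$ is constant'' directly from $f\ast L^1_0=0$, deferring the reduction of problem~(A) to problem~(B) to \cite[Theorem 3.1]{Ben-0}. You instead keep $f$ non-radial and recover the spherical mean-value property $f\ast\sigma_r=f$ via the approximate-identity argument; this is correct and is essentially how \cite{Ben-2} closes the argument, so nothing is lost.
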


Note that the theorem above includes the complex measure too unlike the Furstenberg theorem where the measure is essentially positive. They have also proved that any probabilty measure $\mu$ with $\mu(\{0\})\neq 1$ satisfies all the conditions of the above theorem. Hence for $\mathrm{SL}(2, \R)$ their result contains the Furstenberg Theorem as a particular case. 

Since, in this paper, we have obtained the similar Winner-Tauberian Theorem for general hypergeometric transforms  (which include all real rank one cases), it is natural to expect that the theorem above holds true for hypergeometric cases. The notion of `$\mu$-harmonic' does not make sense in general unless the pair $(\alpha,\beta)$ arises from a geometric case. But this difficulty can be overcome by the following observation. If $G$ is of rank one symmetric space, then writing the Cartan decomposition $G=KAK$, we can identify $K$-biinvariant functions on $G$ with even functions on $A=\mathbb{R}$. Therefore, taking an average over $K$, we can write the problem \textbf{(A)} in the following equivalent form ((see the proof of \cite[Theorem 3.1]{Ben-0})):

\textbf{(B)} Let $\mu$ be an even complex measure on $\mathbb R$ such that $\mu(\R)=1$. Then under what condition on $\mu$, the only even bounded solutions (on $\mathbb R$) of the equation $f\ast\mu=f$ are the constant functions. Here,  the convolution $\ast$ is defined by (\ref{convolution-measure}).

Now we are in position to state the analogues of Theorem \ref{theorem-furstenberg-D} for our hypergeometric cases. Before stating the theorem, we point out that the choice of maximal ideal space is horizontal strip in our case, where as their is vertical.

\begin{theorem}\label{theorem-furstenberg}
Let $\mu$ be an even complex measure on $\mathbb{R}$ such that $\mu(\R)=1$, $ \mu(\{0\})\neq 1$, $\widehat{\mu}(\lambda)\neq 1$ for all $\lambda\in S_1\setminus \{\pm i\rho\}$, and 
\begin{eqnarray}\label{a}
\limsup_{x\rightarrow \rho-}(\rho-x)\log|1-\widehat{\mu}(ix)|=0.
\end{eqnarray}
Then the only even bounded solutions of the equation $f*\mu=f$ are the constant functions. 
\end{theorem}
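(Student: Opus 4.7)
The plan is to reduce Theorem \ref{theorem-furstenberg} to the Wiener Tauberian Theorem \ref{main-theorem}(2) by exhibiting a single $L^1$ function whose closed ideal equals $L^1_0(\R,\Delta)_e$ and which is automatically annihilated by any bounded solution $f$ of $f*\mu=f$. First, rewrite the equation as $f*(\delta-\mu)=0$, fix $\lambda_0\in\mathbb{C}_+$ with $\Im\lambda_0>\rho$ (so $b_{\lambda_0}\in L^1(\R,\Delta)_e$ by property (4) of Section 3), and set $h:=b_{\lambda_0}-b_{\lambda_0}*\mu\in L^1(\R,\Delta)_e$. Using Fubini together with the symmetry of the Jacobi translation and the evenness of $\mu$, one obtains $\int f\,(g-g*\mu)\,\Delta\,dt=\int(f-f*\mu)\,g\,\Delta\,dt=0$ for every $g\in L^1(\R,\Delta)_e$. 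Specialising to $g=b_{\lambda_0}*k$ for $k\in L^1$ shows that $f$ (viewed as a bounded linear functional on $L^1(\R,\Delta)_e$) annihilates $h*L^1(\R,\Delta)_e$, and therefore also the closed ideal $I$ generated by $h$.

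Next, verify the hypotheses of Theorem \ref{main-theorem}(2) for the singleton family $\{h\}$. Since $\phi_{\pm i\rho}\equiv 1$ (the hypergeometric series with a zero parameter is constantly $1$), we have $\widehat{\mu}(\pm i\rho)=\mu(\R)=1$, and by Lemma \ref{lemma-spherical transform of b-lambda},
\[
\widehat{h}(\xi)=\frac{1-\widehat{\mu}(\xi)}{\xi^2-\lambda_0^2}.
\]
The denominator is non-vanishing on $S_1$ (as $\pm\lambda_0\notin S_1$), so by the hypothesis $\widehat{\mu}(\lambda)\neq 1$ on $S_1\setminus\{\pm i\rho\}$ the zero set of $\widehat{h}$ in $S_1$ is exactly $\{\pm i\rho\}$. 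For the growth conditions: as $|\xi|\to\infty$ on $\R$, $\widehat{\mu}(\xi)\to\mu(\{0\})\neq 1$ (by the Riemann--Lebesgue property of Fourier--Jacobi transforms of measures recalled in the preliminaries), so $|1-\widehat{\mu}(\xi)|$ is bounded below; combined with the polynomial decay $\widehat{b}_{\lambda_0}(\xi)=O(\xi^{-2})$, this gives $\log|\widehat{h}(\xi)|=O(\log|\xi|)$, hence $\delta_\infty^+(\widehat{h})=0$. For $\delta_{i\rho}$: as $t\to\rho-$, $(it)^2-\lambda_0^2$ stays bounded away from $0$ so $(\rho-t)\log|\widehat{b}_{\lambda_0}(it)|\to 0$, and hypothesis (\ref{a}) contributes $\limsup_{t\to\rho-}(\rho-t)\log|1-\widehat{\mu}(it)|=0$. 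Therefore $\delta_{i\rho}(\widehat{h})=0$.

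Applying Theorem \ref{main-theorem}(2) with $\Lambda=\{\lambda_0\}$ yields $I=L^1_0(\R,\Delta)_e$. Combined with the first paragraph, $f$ annihilates $L^1_0(\R,\Delta)_e=\{g\in L^1(\R,\Delta)_e:\int g\,\Delta=0\}$; hence the functional $g\mapsto\int fg\,\Delta$ depends only on $\int g\,\Delta$, so it is a scalar multiple of $g\mapsto\int g\,\Delta$, which forces $f$ to be constant a.e. I expect the main technical point to be the transfer step in the first paragraph: moving the convolution by $\mu$ off of $g$ and onto $f$ under the pairing $\langle f,g\rangle=\int fg\,\Delta$ is the step where the particular structure of Jacobi convolution (symmetry of $\tau_s$, evenness of $\mu$) has to be invoked carefully; everything afterwards is a direct verification of the Wiener Tauberian hypotheses.
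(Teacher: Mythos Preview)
Your argument is correct and follows the same route as the paper: pass from $f*\mu=f$ to $f*(\delta-\mu)=0$, consider the closed ideal generated by $(\delta-\mu)*L^1$ inside $L^1_0$, verify the hypotheses of Theorem~\ref{main-theorem}(2), conclude that this ideal is all of $L^1_0(\R,\Delta)_e$, and deduce that $f$ is constant.

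The only genuine difference is that the paper works with the \emph{entire} family $\mathfrak{S}=(\mu-\delta)*L^1(\R,\Delta)_e$ and merely asserts that it contains some element $g$ with $\delta_\infty^+(\widehat g)=0$ and some (possibly different) element with $\delta_{i\rho}=0$, whereas you exhibit a \emph{single} explicit generator $h=b_{\lambda_0}-b_{\lambda_0}*\mu$ and check both conditions for it directly. Your choice is natural because $\widehat{b}_{\lambda_0}$ is nonvanishing on $S_1$ and has transparent asymptotics, so the verification of $\delta_\infty^+(\widehat h)=\delta_{i\rho}(\widehat h)=0$ reduces to the stated hypotheses on $\mu$ in one line each. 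This makes the argument slightly cleaner and more self-contained than the paper's, at no extra cost. The paper, conversely, phrases the annihilation step as $f*\mathfrak S=0\Rightarrow f*L^1_0=0$ via convolution rather than via the pairing $\langle f,g\rangle=\int fg\,\Delta$; these are equivalent here (evaluating $f*g$ at $0$ recovers the pairing), so your ``transfer step'' and their convolution formulation are two sides of the same identity $\langle \tau_s g,f\rangle=\langle g,\tau_s f\rangle$, which is standard for the Jacobi translation.
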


\begin{proof}
If $f$ is constant function then, it is easy to see that, 
$\tau_sf(t)=f(t)$ for all $s,t\in\R$. Therefore 
$$
f*\mu(t)=f(t)\int_\R d\mu(s)=f(t),\,\, t\in\R.
$$ 

Conversely, let $f$ be an even bounded function on $\mathbb R$ such that $f*\mu=f$. We need to show that $f$ is a constant function. The proof is essentially same as that of the Theorem \ref{theorem-furstenberg-D}. Let $I$ be the closed ideal in $L^1_{0}(\mathbb{R}, \Delta)_e$ generated by $\mathfrak{S} =(\mu-\delta)*L^1(\mathbb{R}, \Delta)_e$. We shall show that $\mathfrak{S}$ satisfies all the conditions of Theorem \ref{main-theorem}(2). Since $\widehat{\mu}(\lambda)\neq 1$ for all $\lambda \in S_1\setminus \{\pm i\rho\}$, the common zero set of Foureir-Jacobi transforms of the elements in $\mathfrak{S}$ is $\{\pm i\rho\}$. 
Also we have, $\widehat{\mu}(t)\rightarrow 
\mu(\{0\})$ as $t\rightarrow\infty$. But its 
given that $\mu(\{0\})\neq 1$. Therefore it 
follows that $\mathfrak{S}$ contains an $g$ 
such that $\delta_\infty^+(g)=0$. Also (\ref{a}) implies that $\mathfrak{S}$ contains an element $h$ such that $\delta_{i\rho}(h)=0$. Hence, by Theorem \ref{main-theorem} (2), we can conclude that $I=L_0^1(\mathbb R, \Delta)_e$. Since $f*\mu=f$, clearly, $f*\mathfrak{S}=0$, and hence $f*L^1_0(\mathbb R, \Delta)_e=0$ which implies that $f$ is a constant function. 
\end{proof}

\begin{corollary}\label{corollary-furstenberg}
Let $\mu$ be an even probability measure such that  $\mu(\{0\})\neq 1$. Then the only even bounded solutions of the equation $f*\mu=f$ are the constant functions. 
\end{corollary}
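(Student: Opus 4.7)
The plan is to reduce the corollary to Theorem~\ref{theorem-furstenberg} by checking that every even probability measure $\mu$ with $\mu(\{0\})\ne 1$ satisfies the four hypotheses of that theorem. Two of them are immediate: $\mu(\R)=1$ holds because $\mu$ is a probability measure, and $\mu(\{0\})\ne 1$ is assumed. So the work reduces to verifying (i) that $\widehat{\mu}(\lambda)\ne 1$ for every $\lambda\in S_1\setminus\{\pm i\rho\}$, and (ii) that $\limsup_{x\to\rho^-}(\rho-x)\log|1-\widehat{\mu}(ix)|=0$.

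For (i) I would begin from the classical bound $|\phi_\lambda(t)|\le 1$ for all $t\in\R$ and $\lambda\in S_1$, together with the normalization $\phi_{\pm i\rho}\equiv 1$. These make $\widehat{\mu}$ holomorphic on the open strip $S_1^0$, continuous and bounded by $1$ in modulus on $S_1$, with $\widehat{\mu}(\pm i\rho)=1$. If $\widehat{\mu}(\lambda_0)=1$ for some $\lambda_0\in S_1^0$ (in particular this covers every real $\lambda_0$, since $\R\subset S_1^0$), the maximum modulus principle forces $\widehat{\mu}\equiv 1$ on $S_1^0$; by uniqueness of the Fourier--Jacobi transform on bounded measures this forces $\mu=\delta_0$, contradicting $\mu(\{0\})\ne 1$. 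For a boundary point $\lambda_0=x_0\pm i\rho$ with $x_0\ne 0$ one must argue directly: from the decomposition $\phi_{\lambda_0}=c(\lambda_0)\Phi_{\lambda_0}+c(-\lambda_0)\Phi_{-\lambda_0}$ and the asymptotics $\Phi_{\pm\lambda_0}(t)\sim e^{(\pm i\lambda_0-\rho)t}$, one reads off $|\phi_{\lambda_0}(t)|<1$ for every $t\ne 0$; integrating against $\mu$ and using $\mu(\{0\})<1$ then gives $|\widehat{\mu}(\lambda_0)|<1$ strictly.

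For (ii), the upper bound $\limsup(\rho-x)\log|1-\widehat{\mu}(ix)|\le 0$ is immediate from the crude estimate $|1-\widehat{\mu}(ix)|\le 2$. For the matching lower bound I would expand the hypergeometric series
$$
\phi_{ix}(t)={}_2F_1\!\Big(\tfrac{\rho-x}{2},\tfrac{\rho+x}{2};\alpha+1;-\sinh^2 t\Big)
$$
in its first parameter around $0$. Using $\frac{d}{da}(a)_n\big|_{a=0}=(n-1)!$ for $n\ge 1$, this yields
$$
1-\phi_{ix}(t)=\tfrac{\rho-x}{2}\,h(t)+O\bigl((\rho-x)^2\bigr),
$$
where $h(t)\ge 0$ (because $\phi_{ix}\le 1$) and $h(t)>0$ for $t\ne 0$. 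Since $\mu(\{0\})<1$, the integral $\int h\,d\mu$ is strictly positive, so $1-\widehat{\mu}(ix)\ge C(\rho-x)$ for some $C>0$ and all $x$ sufficiently close to $\rho$. Hence $(\rho-x)\log|1-\widehat{\mu}(ix)|\ge (\rho-x)[\log C+\log(\rho-x)]\to 0$, and (ii) follows. With (i) and (ii) in hand, Theorem~\ref{theorem-furstenberg} applies directly.

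The main technical obstacle is the boundary portion of (i): on $\partial S_1\setminus\{\pm i\rho\}$ no soft function-theoretic argument (maximum modulus or identity theorem) is available, so the strict inequality $|\phi_{\lambda_0}(t)|<1$ must be extracted from the explicit asymptotic structure of $\phi_{\lambda_0}$ through the $c$-function and the decomposition into $\Phi_{\pm\lambda_0}$ before integrating against $\mu$. A secondary technicality in (ii) is the justification of the Taylor expansion of $\phi_{ix}(t)$ uniformly in $t$: the defining hypergeometric series only converges in a bounded range of $t$, and the behavior for large $t$ has to be controlled through the same $c$-function/$\Phi_\lambda$ expansion used for (i).
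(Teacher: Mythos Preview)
Your overall reduction to Theorem~\ref{theorem-furstenberg} is exactly what the paper does, and your treatment of the interior case of (i) via the maximum modulus principle is a nice alternative to the paper's direct computation with the integral representation (\ref{integral representation of hypergeometric function}). The handling of (ii) is also morally the same as the paper's; the paper disposes of your ``secondary technicality'' simply by choosing $0<a<b$ with $\mu([a,b])>0$ and running the Taylor expansion only on the compact interval $[a,b]$, where uniformity in $t$ is automatic. You should adopt that localization rather than trying to control the expansion globally.

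The genuine gap is in the boundary part of (i). The decomposition $\phi_{\lambda_0}=c(\lambda_0)\Phi_{\lambda_0}+c(-\lambda_0)\Phi_{-\lambda_0}$ and the asymptotics $\Phi_{\pm\lambda_0}(t)\sim e^{(\pm i\lambda_0-\rho)t}$ only describe $\phi_{\lambda_0}(t)$ as $t\to\infty$; they say nothing about small or moderate $t>0$, so you cannot ``read off'' $|\phi_{\lambda_0}(t)|<1$ for \emph{every} $t\ne 0$ from them. And you do need the strict inequality for every $t>0$, since $\mu$ is an arbitrary probability measure with $\mu(\{0\})<1$. The paper proves this (Lemma~\ref{phi strictly less than one}, Case~2) by passing to the Opdam function $G_\lambda^{(\alpha,\beta)}$, computing $\frac{d^2}{dt^2}|G_{a+i\rho}|^2(0)=-\frac{a^2(2\alpha+1)}{2(\alpha+1)^2}<0$ to see that $|G_{a+i\rho}|^2$ is strictly below $1$ on a punctured neighbourhood of $0$, and then invoking Schapira's result that $t\mapsto\max\{|G_{a+i\rho}(t)|^2,|G_{a+i\rho}(-t)|^2\}$ is nonincreasing on $[0,\infty)$ to propagate the strict inequality to all $t\ne 0$. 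The averaging $\phi_\lambda(t)=\tfrac12(G_\lambda(t)+G_\lambda(-t))$ then gives $|\phi_{a+i\rho}(t)|<1$ for all $t>0$. Your asymptotic approach cannot substitute for this monotonicity input.
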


Again the proof of the above corollary is same as that of  \cite[Corollary 7.2]{Ben-2}, once we have the following two lemmas. 
We shall make use the following derivation property of the hypergeometric function (see \cite[p.241, eqn. (9.2.2)]{Lebedev}):
$$\frac{d}{dz} {}_2F_1(a,b;c;z)=\frac{ab}{c}  {}_2F_1(a+1,b+1;c+1;z), z\in \C\setminus [1, \infty].$$

Helgason-Johnson's theorem states that $|\phi_\lambda|\leq 1$ if and only if $\lambda\in S_1$. We have the following:
\begin{lemma}\label{phi strictly less than one}
$|\phi_\lambda(t)|<1$ for all $t>0$ if and only if $\lambda\in S_1\setminus\{\pm i\rho\}$.
\end{lemma}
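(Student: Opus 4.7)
The plan is to treat the two directions separately. The ``only if'' direction follows from Helgason--Johnson together with the observation that $\phi_{\pm i\rho}\equiv 1$; the substantive ``if'' direction is a rigidity argument in the product formula, followed by real-analytic continuation on $\R$ and an ODE evaluation at $0$. More precisely, for the ``only if'' part: strict inequality plus $\phi_\lambda(0)=1$ and evenness give $|\phi_\lambda|\le 1$ on $\R$, so Helgason--Johnson forces $\lambda\in S_1$; and from the hypergeometric representation $\phi_\lambda(t)={}_2F_1\bigl(\tfrac{\rho-i\lambda}{2},\tfrac{\rho+i\lambda}{2};\alpha+1;-\sinh^2 t\bigr)$, one of the upper parameters vanishes at $\lambda=\pm i\rho$, giving $\phi_{\pm i\rho}\equiv 1$ and hence $\lambda\in S_1\setminus\{\pm i\rho\}$.

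For the main direction, suppose $\lambda\in S_1\setminus\{\pm i\rho\}$ and, for contradiction, that $|\phi_\lambda(t_0)|=1$ for some $t_0>0$. Setting $s=t=t_0$ in $\tau_s\phi_\lambda(t)=\phi_\lambda(s)\phi_\lambda(t)$ written as an integral against $dm_{\alpha,\beta}$, and using that $dm_{\alpha,\beta}$ is a probability measure (from $\tau_0 f=f$) along with $|\phi_\lambda|\le 1$ globally, one obtains the chain
\[
1=|\phi_\lambda(t_0)|^2=\left|\int \phi_\lambda(x(r,\psi))\,dm_{\alpha,\beta}(r,\psi)\right|\le \int |\phi_\lambda(x(r,\psi))|\,dm_{\alpha,\beta}(r,\psi)\le 1,
\]
where $x(r,\psi)=\cosh^{-1}\lvert\cosh^2 t_0+re^{i\psi}\sinh^2 t_0\rvert$. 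Equality throughout forces $|\phi_\lambda(x(r,\psi))|=1$ for $m$-a.e.\ $(r,\psi)\in \operatorname{supp} dm_{\alpha,\beta}$. In each of the three shapes of the measure ($\alpha>\beta>-1/2$, $\alpha=\beta$, $\beta=-1/2$), the image $E\subset[0,2t_0]$ of $x$ over $\operatorname{supp} dm_{\alpha,\beta}$ contains an interval of positive length (the modulus inside runs continuously between $1$ and $\cosh 2t_0$), so by continuity of $\phi_\lambda$, $|\phi_\lambda|\equiv 1$ on that interval.

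Complex-conjugating the ODE~(\ref{eqn-2}) and invoking uniqueness of the normalized even $C^\infty$ solution gives $\overline{\phi_\lambda(s)}=\phi_{\bar\lambda}(s)$ for real $s$, so $s\mapsto |\phi_\lambda(s)|^2=\phi_\lambda(s)\phi_{\bar\lambda}(s)$ is a real-analytic function of $s\in\R$. Being $\equiv 1$ on an interval, it must equal $1$ on all of $\R$ by the identity theorem. Reading off~(\ref{eqn-2}) at $t=0$, using $\phi_\lambda'(0)=0$ and the limit $\lim_{t\to 0}\coth t\cdot \phi_\lambda'(t)=\phi_\lambda''(0)$, one obtains $\phi_\lambda''(0)=-(\lambda^2+\rho^2)/(2(\alpha+1))$, hence
\[
0=(\phi_\lambda\phi_{\bar\lambda})''(0)=\phi_\lambda''(0)+\phi_{\bar\lambda}''(0)=-\frac{\Re(\lambda^2)+\rho^2}{\alpha+1}.
\]
Writing $\lambda=a+ib$ with $|b|\le\rho$, this becomes $a^2+(\rho^2-b^2)=0$, forcing $a=0$ and $b=\pm\rho$, i.e.\ $\lambda=\pm i\rho$, a contradiction.

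The main obstacle is the case analysis verifying that $E$ is nontrivial in each form of $dm_{\alpha,\beta}$, especially the two degenerate measures $dm_{\alpha,\alpha}$ and $dm_{\alpha,-1/2}$; in each case this reduces to a direct computation at extremal $(r,\psi)$. The rigidity in the integral inequality, the passage to a global real-analytic identity via the identity theorem, and the ODE computation at $0$ are all routine.
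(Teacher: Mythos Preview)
Your proof is correct and takes a genuinely different route from the paper's. The paper splits the ``if'' direction into two cases. For $\lambda\in S_1^0$ it uses the bound $|\phi_\lambda(t)|\le \phi_{i\Im\lambda}(t)$ together with the Euler integral representation (\ref{integral representation of hypergeometric function}) of ${}_2F_1$ to see directly that $\phi_{i\Im\lambda}(t)<1$ for $t>0$. For the boundary case $\lambda=a\pm i\rho$ with $a\neq 0$, it passes to the Heckman--Opdam functions $G_\lambda^{(\alpha,\beta)}$, computes $\tfrac{d^2}{dt^2}\bigl|G_{a+i\rho}^{(\alpha,\beta)}\bigr|^2(0)=-\tfrac{a^2(2\alpha+1)}{2(\alpha+1)^2}\neq 0$ via the derivative formula for ${}_2F_1$, and then invokes Schapira's monotonicity result \cite[Proposition 3.1]{Schapira} for $t\mapsto\max\{|G_\lambda(t)|^2,|G_\lambda(-t)|^2\}$ to conclude $|G_\lambda(t)|<1$ for $t\neq 0$, whence $|\phi_\lambda(t)|<1$. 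Your argument, by contrast, is uniform in $\lambda\in S_1\setminus\{\pm i\rho\}$: you feed a hypothetical point of modulus~$1$ into the product formula, use equality in the triangle inequality against the probability measure $dm_{\alpha,\beta}$ to get $|\phi_\lambda|\equiv 1$ on an interval, and then combine real-analyticity of $\phi_\lambda\phi_{\bar\lambda}$ with the second-derivative identity $(\phi_\lambda\phi_{\bar\lambda})''(0)=-(\Re(\lambda^2)+\rho^2)/(\alpha+1)$ to force $\lambda=\pm i\rho$. The paper's Case~1 buys a sharper pointwise bound $|\phi_\lambda|\le\phi_{i\Im\lambda}$ in the interior, and its Case~2 ties into the $G_\lambda$ machinery already introduced in the preliminaries; your route is more self-contained (no appeal to \cite{Schapira}), relies only on the product formula and the ODE, and additionally makes the easy ``only if'' direction explicit, which the paper leaves unstated.
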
 
\begin{proof}
Case 1 : $\lambda\in S_1^0$. Then for $t>0$ 
\begin{eqnarray*}
|\phi_\lambda(t)|\leq \phi_{i\Im \lambda}(t) &=&_2F_1\left(\frac{\rho+\Im\lambda}{2},\frac{\rho-\Im\lambda}{2};\alpha+1;-\sinh^2t\right)\\
&=&\frac{\Gamma(\alpha+1)}{\Gamma\left(\frac{\rho+\Im\lambda}{2}\right)\Gamma\left(\frac{\rho-\Im\lambda}{2}\right)}\int_0^1s^{\frac{\rho-\Im\lambda}{2}-1}(1-s)^{\frac{\alpha-\beta+1+\Im\lambda}{2}-1}(1+s\sinh^2t)^{-\frac{\rho+\Im\lambda}{2}}ds\\
&<&\frac{\Gamma(\alpha+1)}{\Gamma\left(\frac{\rho+\Im\lambda}{2}\right)\Gamma\left(\frac{\rho-\Im\lambda}{2}\right)}\int_0^1s^{\frac{\rho-\Im\lambda}{2}-1}(1-s)^{\frac{\alpha-\beta+1+\Im\lambda}{2}-1}ds\\
&=&\phi_{i\Im\lambda}(0)=1.
\end{eqnarray*}

Case 2 : $\lambda=a+i\rho$, $a\neq 0$. Recall the function $G_\lambda^{(\alpha,\beta)}$ from preliminaries.
$$
G_\lambda^{(\alpha,\beta)}(t)=\phi_\lambda^{(\alpha,\beta)}(t)+\frac{\rho+i\lambda}{4(\alpha+1)}(\sinh 2t)\phi_\lambda^{(\alpha+1,\beta+1)}(t),\,\,\,t\in\mathbb R.
$$
Using the derivation formula of hypergeometric function, we can evaluate $\frac{d^2}{dt^2}\left|G^{(\alpha,\beta)}_{a+i\rho}\right|^2(0)$ to be equal to $-\frac{a^2(2\alpha+1)}{2(\alpha+1)^2}$ which is non-zero. Therefore, $\left|G_{a+i\rho}^{(\alpha,\beta)}\right|^2$ being analytic it can not be identically $1$. By \cite[Proposition 3.1]{Schapira}, $|G^{(\alpha,\beta)}_{a+i\rho}(t)|\leq G_{i\rho}(t)=1$ for all $t\in\mathbb R$. Since $|G^{(\alpha,\beta)}_{a+i\rho}(0)|^2=1$, we can have $\epsilon>0$ such that $|G^{(\alpha,\beta)}_{a+i\rho}(t)|^2<1$ for all non-zero $t$ with $|t|\leq \epsilon$. But, in the proof of the \cite[Proposition 3.1]{Schapira} it is shown that
$$
t\rightarrow \max\left\{{|G^{(\alpha,\beta)}_{a+i\rho}(t)|^2,|G^{(\alpha,\beta)}_{a+i\rho}(-t)|^2}\right\}
$$
is a decreasing function of $t\geq 0$.  So it follows that $|G^{(\alpha,\beta)}_{a+i\rho}(t)|$ is strictly less than $1$ for all $t\neq 0$. Since
$$
\phi_\lambda^{(\alpha,\beta)}(t)=\frac{1}{2}\left[G_\lambda^{(\alpha,\beta)}(t)+G_\lambda^{(\alpha,\beta)}(-t)\right],\,\,\ t\in\mathbb R,
$$
the proof follows. 

\end{proof}

\begin{lemma}\label{lemma-derivative of phi}
Let $t>0$. Then $\frac{d}{dx}\mid_{x=\rho}[\phi_{ix}(t)]>0$.
\end{lemma}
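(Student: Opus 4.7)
The plan is to set $u(t) := \frac{d}{dx}\big|_{x=\rho}\phi_{ix}(t)$ and show $u'(t)>0$ on $(0,\infty)$ by deriving a first-order ODE; combined with $u(0)=0$ this yields $u(t)>0$ for every $t>0$.

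First I would collect two observations. Since $\phi_{ix}(0)=1$ for all $x$, one has $u(0)=0$. Next, $\phi_{i\rho}(t)\equiv 1$: substituting $\lambda=i\rho$ into the hypergeometric representation of $\phi_\lambda$ makes one of the upper parameters vanish, leaving ${}_2F_1(\rho,0;\alpha+1;-\sinh^2 t)=1$. Starting from the Jacobi equation $(L+\lambda^2+\rho^2)\phi_\lambda=0$ with $\lambda=ix$, written as $L\phi_{ix}=(x^2-\rho^2)\phi_{ix}$, I differentiate in $x$ and set $x=\rho$. This is legitimate because $\phi_{ix}(t)$ is jointly real-analytic in $(x,t)$ via the defining hypergeometric series, so $\partial_x$ commutes with $L$. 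The result is
$$Lu(t)\;=\;2\rho\,\phi_{i\rho}(t)\;=\;2\rho.$$

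Now I would put $L$ into its formally self-adjoint form. A direct computation shows $\Delta'(t)/\Delta(t)=(2\alpha+1)\coth t+(2\beta+1)\tanh t$, so $Lf=\Delta^{-1}(\Delta f')'$ on $(0,\infty)$. The equation for $u$ becomes $(\Delta u')'=2\rho\Delta$. Integrating from $\epsilon>0$ to $t$ gives
$$\Delta(t)u'(t)-\Delta(\epsilon)u'(\epsilon)=2\rho\int_\epsilon^t\Delta(s)\,ds.$$
Because $u$ inherits smoothness and evenness from $\phi_{ix}$, one has $u'(0)=0$ and $u'(s)=O(s)$ near $0$; combined with $\Delta(s)\sim C s^{2\alpha+1}$, this forces $\Delta(\epsilon)u'(\epsilon)=O(\epsilon^{2\alpha+2})\to 0$ as $\epsilon\to 0^+$ (recall $\alpha>-\tfrac12$). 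Therefore
$$u'(t)=\frac{2\rho}{\Delta(t)}\int_0^t\Delta(s)\,ds>0\qquad(t>0),$$
and integrating from $0$ with $u(0)=0$ yields $u(t)>0$ for all $t>0$, as required.

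There is really no major obstacle here: the argument is a textbook Sturm–Liouville trick once one spots that $\phi_{i\rho}\equiv 1$ turns the $x$-derivative of the eigenvalue equation into an inhomogeneous equation with a constant right-hand side. The only mild subtlety is the justification of the boundary behavior at $0$, which is handled cleanly by the explicit asymptotics of $\Delta$ and the evenness/smoothness of $u$.
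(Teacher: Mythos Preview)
Your proof is correct and takes a genuinely different route from the paper's. Both arguments define $u(t)=\partial_x\big|_{x=\rho}\phi_{ix}(t)$, note $u(0)=0$, and reduce to showing $u'(t)>0$ on $(0,\infty)$; the difference lies in how $u'(t)$ is computed. The paper interchanges the $t$- and $x$-derivatives and applies the classical derivation formula $\frac{d}{dz}{}_2F_1(a,b;c;z)=\frac{ab}{c}\,{}_2F_1(a+1,b+1;c+1;z)$ directly to the hypergeometric representation of $\phi_{ix}$, obtaining the explicit expression
\[
u'(t)=\frac{\rho\sinh t}{2(\alpha+1)}\,{}_2F_1(\rho+1,1;\alpha+2;-\sinh^2 t),
\]
whose positivity is then read off from Euler's integral representation. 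You instead differentiate the eigenvalue equation in the spectral parameter to get the inhomogeneous equation $Lu=2\rho$ (using $\phi_{i\rho}\equiv 1$), rewrite $L$ in Sturm--Liouville form $Lf=\Delta^{-1}(\Delta f')'$, and integrate once to obtain $u'(t)=\frac{2\rho}{\Delta(t)}\int_0^t\Delta(s)\,ds$. Your approach is more structural and avoids any special-function identities, at the cost of a mild boundary argument at $t=0$; the paper's approach is computationally explicit and yields a closed hypergeometric formula for $u'$, which could be useful if one needed quantitative control. Both are clean and complete.
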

\begin{proof}
Define the function $g$ on $\mathbb{R}$ by
$$
g(t)=\frac{d}{dx}\bigg\vert_{x=\rho}[\phi_{ix}(t)].
$$
Then $g(0)=0$ and
\begin{eqnarray*}
g^\prime (t)&=&\frac{d}{dx}\bigg\vert_{x=\rho}\left(\frac{d}{dt}[\phi_{ix}(t)]\right)\\
&=&\frac{d}{dx}\bigg\vert_{x=\rho}\left[\frac{(\frac{\rho+x}{2})(\frac{\rho-x}{2})}{\alpha+1}\,_2F_1\left(\frac{\rho+x}{2}+1,\frac{\rho-x}{2}+1;\alpha+2;-\sinh^2 t)\right)(-\sinh 2t)\right]\\
&=& \frac{\rho\sinh t}{2(\alpha+1)}\,_2F_1\left(\rho+1,1;\alpha+2;-\sinh^2t\right)\\
&=&
\frac{\sinh t}{2\Gamma(\rho)}\int_0^\infty (1-s)^\alpha(1+s\sinh^2t)^{-\rho-1}ds
\end{eqnarray*}
which is strictly positive whenever $t>0$. Therefore $g$ is strictly increasing function on $[0,\infty)$ and hence $g(t)>0$ for all $t>0$.
\end{proof}
\begin{proof}
\textit{of Corollary \ref{corollary-furstenberg}} : We only need to show that the measure $\mu$ satisfies all the conditions of Theorem \ref{theorem-furstenberg}. Since $\mu$ is a probability measure $\mu(\mathbb R)=1$; $\mu(\{0\})\neq 1$ is given, in fact,  $\mu(\{0\})< 1$ since $\mu$ is positive. If $\lambda\in S_1\setminus\{\pm i\rho\}$, then by Lemma \ref{phi strictly less than one},
$$
|\widehat{\mu}(\lambda)|\leq \int_\R|\phi_\lambda^{(\alpha,\beta)}(t)|d\mu(t)<\int_\R d\mu(t)=1.
$$
Therefore we only left to show that $
\limsup_{x\rightarrow \rho-}(\rho-x)\log|1-\widehat{\mu}(ix)|=0.
$ 
For $t\geq 0$, let 
$$
L(t)=\frac{d}{dx}\mid_{x=\rho}[\phi_{ix}^{(\alpha,\beta)}(t)].
$$ Since, by Lemma \ref{lemma-derivative of phi}, $L(t)$ is strictly positive for all $t>0$ and, by the given condition, $\mu$ is is not concentrated at $0$ there exist $b>a>0$ such that 
$$
\int_a^b L(t)d\mu(t)>0.
$$ 
Fix $0<\epsilon<\rho$. From the Taylor series expansion (upto second order) of the function $x\rightarrow \phi_{ix}(t)$ at the point $x=\rho$, it follows that
$$
1-\phi_{ix}(t)\asymp(\rho-x)L(t),\,\,\textup{for all}\,\,x\in[\rho-\epsilon,\rho], t\in[a,b].
$$
Therefore, for all $x\in[\rho-\epsilon,\rho]$,
$$
1-\widehat{\mu}(ix)=2\int_0^\infty\left(1-\phi_{ix}(t)\right)d\mu(t)\geq 2\int_a^b\left(1-\phi_{ix}(t)\right)d\mu(t)\geq 2C(\rho-x)\int_a^bL(t)d\mu(t)
$$
where $C$ is a positive constant which depends only on $\epsilon, a, b$. Therefore it follows that 
$$
\limsup_{x\rightarrow \rho-}(\rho-x)\log|1-\widehat{\mu}(ix)|=0.
$$
as desired. 
\end{proof}

\end{document}